\newtheorem{theorem}{Theorem}[section]
\newtheorem{corollary}[theorem]{Corollary}
\newtheorem{lemma}[theorem]{Lemma}
\newtheorem{proposition}[theorem]{Proposition}
\newtheorem{Definition}[theorem]{Definition}
\newtheorem{Example}[theorem]{Example}
\newtheorem{Remark}[theorem]{Remark}
\newenvironment{remark}{\begin{Remark}\begin{em}}{\end{em}\end{Remark}}
\DeclareMathOperator{\tr}{tr}
\address{Jinmi Hwang \\ Department of Mathematics, Chungbuk National University, Cheongju 28644, Korea}
\email{jinmi0401@chungbuk.ac.kr}
\address{Sejong Kim \\ Department of Mathematics, Chungbuk National University, Cheongju 28644, Korea}
\email{skim@chungbuk.ac.kr}
\begin{document}

\author[Hwang and Kim]{Jinmi Hwang and Sejong Kim}

\title[Bounds for the Wasserstein mean]{Bounds for the Wasserstein mean with applications to the Lie-Trotter mean}

\date{}
\maketitle

\begin{abstract}
As the least squares mean for the Riemannian trace metric on the cone of positive definite matrices, the Riemannian mean with its computational and theoretical approaches has been widely studied. Recently the new metric and the least squares mean on the cone of positive definite matrices, which are called the Wasserstein metric and the Wasserstein mean, respectively, have been introduced. In this paper, we explore some properties of Wasserstein mean such as determinantal inequality and find bounds for the Wasserstein mean. Using bounds for the Wasserstein mean, we verify that the Wasserstein mean is the multivariate Lie-Trotter mean.

\vspace{5mm}


\noindent {\bf Keywords}: Wasserstein mean, Riemannian mean, Lie-Trotter mean
\end{abstract}

\section{Introduction}
The open convex cone $\mathbb{P}_{m}$ of all $ m \times m $ positive definite Hermitian matrices with the inner product $\langle X , Y \rangle_{A} = \tr( A^{-1} X A^{-1} Y)$ on the tangent space $T_{A} (\mathbb{P}_{m})$ at each point $A \in \mathbb{P}_{m}$ gives us a Riemannian structure. Indeed, $\mathbb{P}_{m}$ is a Cartan-Hadamard manifold, a simply connected complete Riemannain manifold with non-positive sectional curvature, and also a Hadamard space. The Riemannian trace metric between $A$ and $B$ is given by $\delta(A,B) = \| \log A^{-1/2} B A^{-1/2} \|_{2}$, where $\Vert \cdot \Vert_{2}$ denotes the Frobenius norm. The natural and canonical mean on a Hadamard space is the least squares mean, called the Cartan mean or Riemannian mean.
For a positive probability vector $\omega = (w_{1}, \dots, w_{n})$, the Riemannian mean of $A_{1}, \dots, A_{n} \in \mathbb{P}_{m}$ is defined as
\begin{equation}
\Lambda(\omega; A_{1}, \dots, A_{n}) = \underset{X \in \mathbb{P}_{m}} {\arg \min} \sum_{j=1}^{n} w_{j} \delta^{2}(X, A_{j}).
\end{equation}
The Riemannian mean with its computational and theoretical approaches has been widely studied. One of the important properties of the Riemannian mean is the arithmetic-geometric-harmonic mean inequalities:
\begin{equation} \label{E:AGH inequalities}
\left[ \sum_{j=1}^{n} w_{j} A_{j}^{-1} \right]^{-1} \leq \Lambda(\omega; A_{1}, \dots, A_{n}) \leq \sum_{j=1}^{n} w_{j} A_{j}.
\end{equation}
Using \eqref{E:AGH inequalities} it has been verified in \cite{HK} that the Riemannian mean is the multivariate Lie-Trotter mean: for any differentiable curves $\gamma_{1}, \dots, \gamma_{n}$ on $\mathbb{P}_{m}$ with $\gamma_{i}(0) = I$ for all $i$,
\begin{displaymath}
\lim_{s \to 0} \Lambda (\omega; \gamma_{1}(s), \gamma_{2}(s), \ldots, \gamma_{n}(s))^{1/s} = \exp \left[ \sum_{i=1}^{n} w_{i} \gamma_{i}'(0) \right].
\end{displaymath}

Bhatia, Jain, and Lim \cite{BJL} have recently introduced a new metric and the least squares mean on $\mathbb{P}_{m}$, called the Wasserstein metric and the Wasserstein mean, respectively. For given $A, B \in \mathbb{P}$, the \emph{Wasserstein metric} $d(A, B)$ is given by
\begin{equation}
d(A, B) = \left[ \tr \left( \frac{A + B}{2} \right) - \tr (A^{1/2} B A^{1/2})^{1/2} \right]^{1/2}.
\end{equation}
In quantum information theory, the value $\tr (A^{1/2} B A^{1/2})^{1/2}$ is known as the fidelity and the Wasserstein metric is known as the Bures distance of density matrices.
The geodesic passing from $A$ to $B$ is given by
\begin{displaymath}
\gamma(t) = (1-t)^{2} A + t^{2} B + t (1-t) [(A B)^{1/2} + (B A)^{1/2}] = A \diamond_{t} B
\end{displaymath}
for $t \in [0,1]$. As the least squares mean for the Wasserstein metric, the \emph{Wasserstein mean} denoted by $\Omega(\omega; A_{1}, \dots, A_{n})$ is defined by
\begin{equation}
\Omega(\omega; A_{1}, \dots, A_{n}) = \underset{X \in \mathbb{P}_{m}} {\arg \min} \sum^{n}_{j=1} w_{j} d^{2}(X, A_{j}),
\end{equation}
and it coincides with the unique solution $X \in \mathbb{P}_{m}$ of the equation
\begin{equation} \label{E:Wass-eq}
X = \sum_{j=1}^{n} w_{j} (X^{1/2} A_{j} X^{1/2})^{1/2}.
\end{equation}
Note that $\Omega(1-t,t; A, B) = A \diamond_{t} B$, and it has been shown that the Wasserstein mean satisfies the arithmetic-Wasserstein mean inequality. On the other hand, it is shown that the Wasserstein mean does not satisfy the monotonicity and the Wasserstein-harmonic mean inequality: see \cite[Section 5]{BJL}. So it is a natural question whether the Wasserstein mean is the multivariate Lie-Trotter mean.
The main goals of this paper are to provide some properties of the Wasserstein mean and to verify that the Wasserstein mean is the multivariate Lie-Trotter mean by finding a lower bound for Wasserstein mean.

We recall in Section 2 the Wasserstein metric with geodesic and see the Wasserstein distance between $A \diamond_{t} B$ and $A \diamond_{t} C$ for $A, B, C \in \mathbb{P}_{m}$ and $t \in [0,1]$. In Section 3 we introduce the Wasserstein mean of positive definite matrices and explore some interesting properties for the Wasserstein mean such as determinantal inequality. We find bounds for the Wasserstein mean in Section 4, and verify in Section 5 that the Wasserstein mean is the multivariate Lie-Trotter mean. Finally, in Section 6 we show that the equation \eqref{E:Wass-eq} for two positive invertible operators $A$ and $B$ and the positive probability vector $\omega = (1-t, t)$ has the unique solution $X = A \diamond_{t} B$. This naturally gives an open problem to extend the Wasserstein mean of positive definite matrices to positive invertible operators.

\section{Wasserstein metric and geodesics}

Let $\mathbb{M}_{m}$ be the set of all $m \times m$ matrices with complex entries. Let $\mathbb{H}_{m}$ be the real vector space of all $m \times m$ Hermitian matrices, and let $\mathbb{P}_{m} \subset \mathbb{H}_{m}$ be the open convex cone of all positive definite matrices. For $A, B \in \mathbb{H}_{m}$ we denote as $A \leq B$ if and only if $B - A$ is positive semi-definite, and as $A < B$ if and only if $B - A$ is positive definite.

The Frobenius norm $\Vert \cdot \Vert_{2}$ gives rise to the Riemannian structure on the
open convex cone $\mathbb{P}_{m}$ with $\langle X, Y \rangle_{A} = \mathrm{Tr}(A^{-1} X A^{-1} Y)$, where $A \in \mathbb{P}_{m}$ and $X, Y \in T_{A}(\mathbb{P}_{m}) = \mathbb{H}_{m}$. Then $\mathbb{P}_{m}$ is a Cartan-Hadamard Riemannian manifold, a simply
connected complete Riemannian manifold with non-positive sectional curvature (the canonical $2$-tensor is non-negative). The Riemannian trace metric between $A$ and $B$ is given by
\begin{displaymath}
\delta(A, B) = \Vert \log A^{-1/2} B A^{-1/2} \Vert_{2},
\end{displaymath}
and the unique (up to parametrization) geodesic curve on $\mathbb{P}_{m}$ connecting from $A$ to $B$ is
\begin{displaymath}
[0,1] \ni t \mapsto A \#_{t} B := A^{1/2} (A^{-1/2} B A^{-1/2})^{t} A^{1/2},
\end{displaymath}
which is called the \emph{weighted geometric mean} of $A$ and $B$. Note that $A \# B = A \#_{1/2} B$ is the unique midpoint of $A$ and $B$ with respect to the Riemannian trace metric, and is the unique solution $X \in \mathbb{P}_{m}$ of the Riccati equation $X A^{-1} X = B$. See \cite{Bh} for more information.

\begin{lemma} \label{L:Geomean}
Let $A, B, C, D \in \mathbb{P}_{m}$ and let $t \in [0,1]$. Then the following are
satisfied.
\begin{itemize}
\item[(1)] $A \#_{t} B = A^{1-t} B^{t}$ if $A$ and $B$ commute.
\item[(2)] $(a A) \#_{t} (b B) = a^{1-t} b^{t} (A \#_{t} B)$ for any $a, b > 0$.
\item[(3)] $A \#_{t} B = B \#_{1-t} A$.
\item[(4)] $A \#_{t} B \leq C \#_{t} D$ whenever $A \leq C$ and $B \leq D$.
\item[(5)] The map $[0,1] \times \mathbb{P}_{m} \times \mathbb{P}_{m} \to \mathbb{P}_{m}, \ (t, A, B) \mapsto A \#_{t} B$ is continuous.
\item[(6)] $X (A \#_{t} B) X^{*} = (X A X^{*}) \#_{t} (X B X^{*})$ for any nonsingular matrix $X$.
\item[(7)] $(A \#_{t} B)^{-1} = A^{-1} \#_{t} B^{-1}$.
\item[(8)] $[(1 - \lambda) A + \lambda B] \#_{t} [(1 - \lambda) C + \lambda D] \geq (1 - \lambda) (A \#_{t} C) + \lambda (B \#_{t} D)$ for any $\lambda \in [0,1]$.
\item[(9)] $\det (A \#_{t} B) = \det(A)^{1-t} \det(B)^{t}$.
\item[(10)] $[(1-t) A^{-1} + t B^{-1}]^{-1} \leq A \#_{t} B \leq (1-t) A + t B$.
\end{itemize}
\end{lemma}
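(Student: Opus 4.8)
The plan is to establish the ten properties of the weighted geometric mean essentially from its definition $A \#_t B = A^{1/2}(A^{-1/2} B A^{-1/2})^t A^{1/2}$ together with two foundational facts that I would either cite or prove first: the Riccati-equation / variational characterization of $\#_t$, and the Löwner-Heinz inequality (operator monotonicity of $X \mapsto X^r$ for $0 \le r \le 1$). Items (1), (2), (3), (6), (7), (9) are purely algebraic manipulations of the defining formula. For (1), when $A$ and $B$ commute all the factors commute and $A^{1/2}(A^{-1/2}BA^{-1/2})^t A^{1/2} = A^{1/2} A^{-t/2} B^t A^{-t/2} A^{1/2} = A^{1-t} B^t$. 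Item (2) follows by pulling scalars through; (3) is the standard symmetry identity, provable by conjugating with $A^{-1/2}$ and $B^{-1/2}$ and using $X^t$ versus $X^{-t} = (X^{-1})^t$; (6) is the congruence-invariance, which I would prove first for $X = A^{-1/2}$ (reducing to $I \#_t C = C^t$) and then note both sides transform the same way, or invoke the Riccati characterization since $X(A\#_t B)X^*$ solves the transported equation; (7) and (9) then follow from (6) and (1) applied in a commuting basis, or directly from $\det(A\#_t B) = \det(A)\det(A^{-1/2}BA^{-1/2})^t = \det(A)^{1-t}\det(B)^t$.

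Next I would handle the order and continuity properties. Item (5), joint continuity, is immediate from continuity of the matrix square root, the continuous functional calculus $X \mapsto X^t$ on $\mathbb{P}_m$, and matrix multiplication. Item (4), monotonicity in both arguments, is the key analytic input: I would first prove $A \le C \implies A \#_t B \le C \#_t B$ by writing $A \#_t B = B^{1/2}(B^{-1/2} A B^{-1/2})^t B^{1/2}$ (using (3)) and applying Löwner–Heinz to $B^{-1/2}AB^{-1/2} \le B^{-1/2}CB^{-1/2}$; monotonicity in the second slot is symmetric, and then $A \#_t B \le C \#_t B \le C \#_t D$. Item (8), joint concavity, is the classical Ando–type result; I would prove it from (4) and (6) by the standard $2m \times 2m$ block-matrix trick: represent $A \#_t C$ as a Schur complement / via the fact that $\begin{pmatrix} A & A\#_t C \\ A \#_t C & C\end{pmatrix}$ interacts well with positivity, or more elementarily reduce the two-point concavity to the scalar inequality after simultaneous congruence — actually the cleanest route is: by (6) with a suitable $X$ we can normalize one convex combination to $I$, reducing (8) to the operator concavity of $Z \mapsto Z^t$, which is again Löwner–Heinz in its concavity form. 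The last item (10) is the arithmetic–geometric–harmonic chain for two variables: the right inequality $A \#_t B \le (1-t)A + tB$ follows by congruence-reduction (conjugate by $A^{-1/2}$) to the scalar/operator inequality $C^t \le (1-t)I + tC$, which holds because $z \mapsto z^t$ lies below its chord; the left inequality follows by applying the right one to $A^{-1}, B^{-1}$ and using (7) to invert.

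The main obstacle I anticipate is item (8), the joint concavity, since unlike the others it is genuinely a convexity theorem rather than a one-line functional-calculus or congruence argument; the reduction to $I$ via (6) works only after checking that the scaling by $[(1-\lambda)A + \lambda B]^{-1/2}$ on both sides is legitimate and that the resulting inequality $[(1-\lambda)I + \lambda M]^t \ge (1-\lambda)I + \lambda M^t$-type statement in the right variables is exactly operator concavity of $t$-th power — there is a bookkeeping subtlety because both slots scale. If that reduction proves awkward I would instead fall back on the block-matrix/Schur-complement proof (Ando's lemma: $(A,C) \mapsto A \#_t C$ is concave because its graph is related to the set of positive semidefinite $2\times 2$ block matrices with prescribed diagonal), which is self-contained modulo the operator-concavity of the power function. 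Everything else I expect to be routine, and all of it can be found in \cite{Bh}, so in the write-up I would state (1)--(10) as known and either give the one-line arguments above or simply refer to the literature, spending whatever detail is warranted only on (4), (8), and (10) since those are the ones actually used later for the Wasserstein-mean bounds.
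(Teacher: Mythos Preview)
Your proposal is correct, but there is nothing to compare it against: the paper does not prove Lemma~\ref{L:Geomean} at all. The lemma is stated as a collection of standard facts about the weighted geometric mean, with the surrounding text pointing to Bhatia's monograph \cite{Bh} (``See \cite{Bh} for more information''); no argument is given in the paper itself. So your detailed sketch already goes well beyond what the authors do.

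On the substance of your sketch: items (1)--(7), (9), (10) are handled exactly as one would expect and are fine. Your instinct that (8) is the only genuinely nontrivial item is correct, and your hesitation about the congruence-reduction route is well placed --- conjugating by $[(1-\lambda)A+\lambda B]^{-1/2}$ normalizes the first slot but leaves a two-variable inequality that is not literally the operator concavity of $Z\mapsto Z^{t}$ in a single variable. The block-matrix/Schur-complement characterization you mention as a fallback works cleanly only for $t=1/2$; for general $t\in[0,1]$ the standard self-contained arguments are either (i) the Kubo--Ando operator-mean framework, where joint concavity is built into the integral representation of the representing function $x^{t}$, or (ii) a direct appeal to the operator concavity of $x\mapsto x^{t}$ combined with the perspective-function formulation $A\#_{t}B = A^{1/2} f(A^{-1/2}BA^{-1/2}) A^{1/2}$, for which joint concavity of the perspective of any operator-concave $f$ is a known lemma. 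Either of these would close (8); since the paper simply cites the result, a one-line reference to \cite{Bh} or \cite{KA} would also be entirely adequate here.
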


Bhatia, Jain, and Lim \cite{BJL} have introduced a new metric on $\mathbb{P}_{m}$, called the \emph{Wasserstein metric}, and have established that it gives us the Riemannian metric and the explicit formula of geodesic curve.
For given $A, B \in \mathbb{P}$ the Wasserstein metric $d(A, B)$ is given by
\begin{displaymath}
d(A, B) = \left[ \tr \left( \frac{A + B}{2} \right) - \tr (A^{1/2} B A^{1/2})^{1/2} \right]^{1/2}.
\end{displaymath}
This metric has been of interest in quantum information where it is called the \emph{Bures distance}, and in statistics and the theory of optimal transport where it is called the \emph{Wasserstein metric}. It is the matrix version of the Hellinger distance between probability distributions: for probability vectors $\mathbf{p} = (p_{1}, \dots, p_{m})$ and $\mathbf{q} = (q_{1}, \dots, q_{m})$
\begin{displaymath}
d(\mathbf{p}, \mathbf{q}) = \left[ \frac{1}{2} \sum_{i=1}^{m} (\sqrt{p_{i}} - \sqrt{q_{i}})^{2} \right]^{1/2}.
\end{displaymath}

We see the Wasserstein metric is related to the solution of extremal problem. Let $\mathbb{U}_{m}$ be the compact subset of all $m \times m$ unitary matrices. For given $A \in \mathbb{P}_{m}$ we define the set $\mathcal{F}(A)$ as
\begin{displaymath}
\mathcal{F}(A) = \{ M \in \mathbb{M}_{m}: A = M M^{*} \} = \{ A^{1/2} U: U \in \mathbb{U}_{m} \}.
\end{displaymath}

\begin{theorem} \cite[Theorem 1]{BJL} \label{T:Wasserstein metric}
For any $A, B \in \mathbb{P}_{m}$
\begin{displaymath}
\begin{split}
d(A, B) & = \frac{1}{\sqrt{2}} \ \underset{M \in \mathcal{F}(A), \ N \in \mathcal{F}(B)}{\min} \Vert M - N \Vert_{2} \\
& = \frac{1}{\sqrt{2}} \ \underset{U \in \mathbb{U}_{m}}{\min} \Vert A^{1/2} - B^{1/2} U \Vert_{2}.
\end{split}
\end{displaymath}
The minimum in the second expression is attained at a unitary matrix $U$ occurring in the polar decomposition of $B^{1/2} A^{1/2}$:
\begin{displaymath}
B^{1/2} A^{1/2} = U | B^{1/2} A^{1/2} | = U (A^{1/2} B A^{1/2})^{1/2}.
\end{displaymath}
\end{theorem}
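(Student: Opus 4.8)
The plan is to collapse the two minimizations to a single optimization over the unitary group $\mathbb{U}_m$ and then apply the classical fact that, among unitaries $U$, the quantity $\operatorname{tr}(UX)$ is maximized by the polar part of $X$. First I would record the identity $\mathcal{F}(A) = \{A^{1/2}U : U \in \mathbb{U}_m\}$ already stated above: since $A = MM^{*}$ is invertible, $M$ is invertible, and its left polar decomposition reads $M = (MM^{*})^{1/2}U = A^{1/2}U$ for some $U \in \mathbb{U}_m$; conversely each $A^{1/2}U$ lies in $\mathcal{F}(A)$. Writing $M = A^{1/2}V$, $N = B^{1/2}W$ and using the unitary invariance of $\Vert\cdot\Vert_2$ (multiply on the right by $V^{*}$ and set $U = WV^{*}$), the double minimum over $\mathcal{F}(A)\times\mathcal{F}(B)$ reduces to $\min_{U\in\mathbb{U}_m}\Vert A^{1/2} - B^{1/2}U\Vert_2$, which already yields the equality of the two right-hand expressions.

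Next I would expand the square: $\Vert A^{1/2} - B^{1/2}U\Vert_2^{2} = \operatorname{tr} A + \operatorname{tr} B - \operatorname{tr}(A^{1/2}U^{*}B^{1/2}) - \operatorname{tr}(B^{1/2}UA^{1/2}) = \operatorname{tr}(A+B) - 2\,\mathrm{Re}\,\operatorname{tr}(UA^{1/2}B^{1/2})$, using $(B^{1/2}U)^{*} = U^{*}B^{1/2}$, cyclicity of the trace, and $\operatorname{tr}(Z^{*}) = \overline{\operatorname{tr} Z}$. Thus minimizing the left side over $U$ amounts to maximizing $\mathrm{Re}\,\operatorname{tr}(UX)$ with $X := A^{1/2}B^{1/2}$. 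Here I would invoke von Neumann's trace inequality (equivalently, that the trace norm is dual to the operator norm): $\mathrm{Re}\,\operatorname{tr}(UX) \le |\operatorname{tr}(UX)| \le \sum_{j} s_{j}(X) = \operatorname{tr}|X|$ for every $U \in \mathbb{U}_m$, where $s_j(X)$ denote the singular values. Taking $U = W^{*}$, where $X = W|X|$ is the (unique, since $X$ is invertible) polar decomposition, gives $\operatorname{tr}(UX) = \operatorname{tr}|X| \ge 0$, so the bound is attained and $\min_{U}\Vert A^{1/2}-B^{1/2}U\Vert_2^2 = \operatorname{tr}(A+B) - 2\operatorname{tr}|X|$.

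It then remains to identify $\operatorname{tr}|X|$ with the fidelity term and to locate the optimal unitary precisely. Since $X = A^{1/2}B^{1/2}$, one has $X^{*}X = B^{1/2}AB^{1/2}$ and $XX^{*} = A^{1/2}BA^{1/2}$; these positive matrices are unitarily equivalent, hence $\operatorname{tr}|X| = \operatorname{tr}(X^{*}X)^{1/2} = \operatorname{tr}(A^{1/2}BA^{1/2})^{1/2}$. Dividing by $2$ gives $\tfrac12\min_{U}\Vert A^{1/2}-B^{1/2}U\Vert_2^2 = \operatorname{tr}\big(\tfrac{A+B}{2}\big) - \operatorname{tr}(A^{1/2}BA^{1/2})^{1/2} = d(A,B)^{2}$, the asserted identity. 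For the attainment statement, from $X = W|X|$ one gets $XX^{*} = W|X|^{2}W^{*}$, so $(XX^{*})^{1/2} = W|X|W^{*}$ and therefore $B^{1/2}A^{1/2} = X^{*} = |X|W^{*} = W^{*}(XX^{*})^{1/2} = W^{*}(A^{1/2}BA^{1/2})^{1/2}$; since $(A^{1/2}BA^{1/2})^{1/2} = |B^{1/2}A^{1/2}|$, the optimal $U = W^{*}$ is exactly the unitary factor in the polar decomposition of $B^{1/2}A^{1/2}$.

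I expect the only delicate point to be this last piece of bookkeeping: matching the polar part $W^{*}$ of $A^{1/2}B^{1/2}$, which the norm computation produces naturally, with the unitary in the polar decomposition of $B^{1/2}A^{1/2}$ as in the statement, via the interplay between left and right polar forms, using the invertibility of $A^{1/2}B^{1/2}$ to ensure uniqueness. The norm expansion and the trace inequality are entirely standard.
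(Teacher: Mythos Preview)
The paper does not supply its own proof of this theorem: it is quoted verbatim from \cite[Theorem 1]{BJL} and used as a black box thereafter (in the remark establishing that $d$ is a metric and in Lemma~2.5). So there is no in-paper argument to compare against.

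Your proof is correct and is in fact the standard one. The reduction of the double minimum over $\mathcal{F}(A)\times\mathcal{F}(B)$ to a single minimum over $\mathbb{U}_m$ via right unitary invariance of $\Vert\cdot\Vert_2$ is clean; the expansion $\Vert A^{1/2}-B^{1/2}U\Vert_2^2 = \tr(A+B) - 2\,\mathrm{Re}\,\tr(UA^{1/2}B^{1/2})$ is correct; and the maximization $\max_{U\in\mathbb{U}_m}\mathrm{Re}\,\tr(UX)=\tr|X|$ via the trace inequality (or duality of the trace and operator norms) is the right tool, with equality at $U=W^{*}$ for $X=W|X|$. The identification $\tr|A^{1/2}B^{1/2}| = \tr(A^{1/2}BA^{1/2})^{1/2}$ via unitary equivalence of $X^{*}X$ and $XX^{*}$ is fine. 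Your final bookkeeping, passing from the polar part of $A^{1/2}B^{1/2}$ to that of its adjoint $B^{1/2}A^{1/2}$, is handled correctly using $(XX^{*})^{1/2}=W|X|W^{*}$; the invertibility hypothesis guarantees uniqueness of $W$, so the optimal $U$ is pinned down exactly as stated.
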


\begin{remark}
We check that $d(A, B)$ is indeed a metric on $\mathbb{P}$ by using Theorem \ref{T:Wasserstein metric}.
\begin{itemize}
\item[(i)] Obviously, $d(A, B) \geq 0$.

\item[(ii)] Assume that $A = B$. Then $U = I$ attains the minimum of $\Vert A^{1/2} - B^{1/2} U \Vert_{2}$ over $U \in \mathbb{U}_{m}$, so the minimum value is $0 = d(A, B)$.
Conversely, assume that $d(A, B) = 0$. Then $\Vert A^{1/2} - B^{1/2} U \Vert_{2} = 0$ when $U = B^{1/2} A^{1/2} (A^{1/2} B A^{1/2})^{-1/2}$. So
\begin{displaymath}
\begin{split}
A^{1/2} = B^{1/2} U & = B^{1/2} B^{1/2} A^{1/2} (A^{1/2} B A^{1/2})^{-1/2} \\
& = B A^{1/2} (A^{-1/2} B^{-1} A^{-1/2})^{1/2} A^{1/2} A^{-1/2} = B (A \# B^{-1}) A^{-1/2}.
\end{split}
\end{displaymath}
Set $X = A \# B^{-1}$. Then $X = B^{-1} A$. By the Riccati equation $X A^{-1} X = B^{-1}$, and so, $B^{-2} A = B^{-1}$. Thus, $A = B$.

\item[(iii)] The Frobenius norm $\Vert \cdot \Vert_{2}$ is unitarily invariant: see \cite[Chapter 5]{HJ}. So
\begin{displaymath}
\Vert A^{1/2} - B^{1/2} U \Vert_{2} = \Vert A^{1/2} U^{*} - B^{1/2} \Vert_{2} = \Vert B^{1/2} - A^{1/2} V \Vert_{2},
\end{displaymath}
where $V = U^{*} \in \mathbb{U}_{m}$. Hence, $d(A, B) = d(B, A)$.

\item[(iv)] Let $A, B, C \in \mathbb{P}_{m}$. By the triangle inequality for Frobenius norm
\begin{displaymath}
\begin{split}
d(A, C) & \leq \frac{1}{\sqrt{2}} \Vert A^{1/2} - C^{1/2} U \Vert_{2} \\
& \leq \frac{1}{\sqrt{2}} ( \Vert A^{1/2} - B^{1/2} V \Vert_{2} + \Vert B^{1/2} V - C^{1/2} U \Vert_{2} ) \\
& = \Vert A^{1/2} - B^{1/2} V \Vert_{2} + \Vert B^{1/2} - C^{1/2} W \Vert_{2},
\end{split}
\end{displaymath}
where $W = U V^{*} \in \mathbb{U}_{m}$. So taking the minimum over all $U, V \in \mathbb{U}_{m}$, we see that $d(A, C) \leq d(A, B) + d(B, C)$.
\end{itemize}
\end{remark}

At this stage we recall a theorem from Riemannian geometry. Let $(\mathcal{M}, g)$ and $(\mathcal{N}, h)$ be Riemannian manifolds with Riemannian metrics $g$ and $h$. A differentiable map $\pi: \mathcal{M} \to \mathcal{N}$ is said to be a \emph{smooth submersion} if its differential $D \pi(p): T_{p} \mathcal{M} \to T_{\pi(p)} \mathcal{N}$ is surjective at every point $p \in \mathcal{M}$.  Let $T_{p} \mathcal{M} = \mathcal{V}_{p} \oplus \mathcal{H}_{p}$ be a decomposition of the tangent space $T_{p} \mathcal{M}$, where $\mathcal{V}_{p} = \ker D \pi(p)$ and $\mathcal{H}_{p} = ( \ker D \pi(p) )^{\bot}$ are called the vertical and horizontal space at $p$. Then $\pi$ is called a \emph{Riemannian submersion} if it is a smooth submersion and the map $D \pi(p): \mathcal{H}_{p} \to T_{\pi(p)} \mathcal{N}$ is isometric for all $p \in \mathcal{M}$.

\begin{theorem} \cite{GHL} \label{T:Riemann}
Let $(\mathcal{M}, g)$ be a Riemannian manifold with Riemannian metrics $g$. Let $G$ be a compact Lie group of isometries of $(\mathcal{M}, g)$ acting freely on $\mathcal{M}$. Let $\mathcal{N} = \mathcal{M} / G$, and let $\pi: \mathcal{M} \to \mathcal{N}$ be the quotient map. Then there exists a unique Riemannian metric $h$ on $\mathcal{N}$ for which $\pi: (\mathcal{M}, g) \to (\mathcal{N}, h)$ is a Riemannian submersion.
\end{theorem}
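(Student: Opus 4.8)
The plan is to define $h$ by forcing $D\pi$ to be an isometry on horizontal spaces, and then to check that this prescription is well posed, smooth, and the only possibility. First I would recall that a compact Lie group acts properly, so the free action of $G$ on $\mathcal{M}$ is free and proper; by the quotient manifold theorem $\mathcal{N} = \mathcal{M}/G$ then carries a unique smooth structure (of dimension $\dim\mathcal{M} - \dim G$) for which $\pi \colon \mathcal{M} \to \mathcal{N}$ is a smooth submersion, and the fibres $\pi^{-1}(q)$ are precisely the orbits $G\cdot p$, each an embedded submanifold. Thus the vertical space is $\mathcal{V}_{p} = \ker D\pi(p) = T_{p}(G\cdot p)$, and putting $\mathcal{H}_{p} = \mathcal{V}_{p}^{\perp}$ (orthogonal complement in $(T_{p}\mathcal{M}, g_{p})$), the restriction $D\pi(p)|_{\mathcal{H}_{p}} \colon \mathcal{H}_{p} \to T_{\pi(p)}\mathcal{N}$ is a linear isomorphism. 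For $q \in \mathcal{N}$, $X \in T_{q}\mathcal{N}$ and $p \in \pi^{-1}(q)$ I write $\tilde{X}^{p} \in \mathcal{H}_{p}$ for the unique horizontal lift of $X$ at $p$, i.e.\ $D\pi(p)\tilde{X}^{p} = X$.

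Next I would set, for $X, Y \in T_{q}\mathcal{N}$,
\[
h_{q}(X, Y) := g_{p}\bigl(\tilde{X}^{p}, \tilde{Y}^{p}\bigr), \qquad p \in \pi^{-1}(q),
\]
and check independence of the choice of $p$. Two points of a fibre differ by $p' = L_{a}(p)$ for some $a \in G$, where $L_{a}$ denotes the (isometric) action of $a$. Since $L_{a}$ permutes the orbits it carries $\mathcal{V}_{p}$ onto $\mathcal{V}_{p'}$, and being a $g$-isometry it carries $\mathcal{H}_{p} = \mathcal{V}_{p}^{\perp}$ onto $\mathcal{H}_{p'}$; from $\pi \circ L_{a} = \pi$ we get $D\pi(p') \circ DL_{a}(p) = D\pi(p)$, so $DL_{a}(p)\tilde{X}^{p} = \tilde{X}^{p'}$. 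As $L_{a}$ preserves $g$, $g_{p'}(\tilde{X}^{p'}, \tilde{Y}^{p'}) = g_{p}(\tilde{X}^{p}, \tilde{Y}^{p})$; hence $h_{q}$ is well defined, evidently bilinear and symmetric, and positive definite because $g_{p}$ is and $D\pi(p)|_{\mathcal{H}_{p}}$ is an isomorphism.

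For smoothness I would note that, $\pi$ being a submersion, the vertical distribution $p \mapsto \mathcal{V}_{p}$ is a smooth subbundle of $T\mathcal{M}$, hence so is $p \mapsto \mathcal{H}_{p}$; writing $P_{p}$ for the $g_{p}$-orthogonal projection onto $\mathcal{H}_{p}$, the symmetric $(0,2)$-tensor $\bar{g}_{p}(u, v) := g_{p}(P_{p}u, P_{p}v)$ on $\mathcal{M}$ is smooth, $G$-invariant, and horizontal, and satisfies $\bar{g} = \pi^{*}h$. Since $\pi$ is a surjective submersion, such a tensor descends to a (necessarily unique, smooth) tensor on $\mathcal{N}$, which is $h$; equivalently one may compute $h$ locally through a smooth local section of $\pi$. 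By construction $D\pi(p)$ is an isometry from $(\mathcal{H}_{p}, g_{p})$ onto $(T_{\pi(p)}\mathcal{N}, h_{\pi(p)})$, so $\pi$ is a Riemannian submersion. Uniqueness is immediate: if $h'$ also makes $\pi$ a Riemannian submersion then $h'_{q}(X, Y) = g_{p}(\tilde{X}^{p}, \tilde{Y}^{p}) = h_{q}(X, Y)$ for all $q, X, Y$.

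The main obstacle is the background machinery rather than the metric itself: one needs the quotient manifold theorem — smoothness of $\mathcal{M}/G$ and of $\pi$ — which rests on the slice theorem and uses compactness of $G$ precisely to make the free action proper, together with the descent principle for $G$-invariant horizontal tensors. Granting these, well-definedness, smoothness, the Riemannian-submersion property, and uniqueness all follow formally from the single fact that each $L_{a}$ is an isometry commuting with $\pi$.
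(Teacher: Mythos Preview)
The paper does not supply its own proof of this theorem: it is quoted as a background result from \cite{GHL} and used without argument. Your proposal is the standard textbook proof (essentially the one in Gallot--Hulin--Lafontaine): construct $h$ by declaring $D\pi|_{\mathcal{H}_p}$ to be an isometry, use $G$-equivariance of horizontal lifts to show well-definedness, descend the $G$-invariant horizontal tensor for smoothness, and read off uniqueness from the submersion condition. The argument is correct and nothing further is needed here.
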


Note that the general linear group $\mathrm{GL}_{m}$ is a Riemannian manifold with the metric induced by the Frobenius inner product. The group $U_{m}$ of unitary matrices is a compact Lie group of isometries for this metric. The quotient space $\mathrm{GL}_{m} / U_{m}$ is $\mathbb{P}$, and the metric inherited by the quotient space $\mathbb{P}$ is (up to a constant factor)
\begin{displaymath}
\underset{U \in U_{m}}{\min} \Vert A^{1/2} - B^{1/2} U \Vert_{2} = \sqrt{2} d(A, B).
\end{displaymath}
The map $\pi: \mathrm{GL}_{m} \to \mathbb{P}, \ \pi(M) = M M^{*}$ is a smooth submersion, and by Theorem \ref{T:Riemann} there is a unique Riemannian metric on $\mathbb{P}$, which is the Wasserstein metric $d$. From this point of view, the geodesic on $\mathbb{P}_{m}$ joining $A$ and $B$ has been derived in \cite{BJL}. The straight line segment $Z(t) = (1-t) A^{1/2} + t B^{1/2} U$ for $0 \leq t \leq 1$ with a unitary matrix $U = B^{1/2} A^{1/2} (A^{1/2} B A^{1/2})^{-1/2}$ is a geodesic in $\mathrm{GL}_{m}$, and by Theorem \ref{T:Riemann} $\gamma(t) = \pi(Z(t)) = Z(t) Z(t)^{*}$ is a geodesic in $\mathbb{P}_{m}$:
\begin{equation} \label{E:Wass-geodesic}
\begin{split}
\gamma(t) & = (1-t)^{2} A + t^{2} B + t(1-t) [ A (A^{-1} \# B) + (A^{-1} \# B) A ] \\
& = (1-t)^{2} A + t^{2} B + t(1-t) [ (A B)^{1/2} + (B A)^{1/2} ].
\end{split}
\end{equation}
We denote $\gamma(t) =: A \diamond_{t} B$ for $t \in [0,1]$. Since $\gamma(t)$ is the natural parametrization of the geodesic joining $A$ and $B$, it satisfies the affine property of parameters: for any $s, t, u \in [0,1]$
\begin{displaymath}
(A \diamond_{s} B) \diamond_{u} (A \diamond_{t} B) = A \diamond_{(1-u) s + u t} B
\end{displaymath}

\begin{lemma}
For any $A, B, C \in \mathbb{P}_{m}$ and $t \in [0,1]$
\begin{displaymath}
d(A \diamond_{t} B, A \diamond_{t} C) \leq t \sqrt{\frac{\lambda}{2}} \Vert A^{-1} \# B - A^{-1} \# C \Vert_{2},
\end{displaymath}
where $\lambda := \lambda_{1}(A)$ is the largest eigenvalue of $A$.
\end{lemma}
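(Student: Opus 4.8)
\emph{Proof proposal.} The plan is to exploit the Riemannian submersion $\pi\colon \mathrm{GL}_m \to \mathbb{P}_m$, $\pi(M)=MM^*$, that was used to derive \eqref{E:Wass-geodesic}. Recall from that derivation that, with $U_B := B^{1/2}A^{1/2}(A^{1/2}BA^{1/2})^{-1/2}$ the unitary factor in the polar decomposition of $B^{1/2}A^{1/2}$, the straight segment $Z_B(t) := (1-t)A^{1/2} + t\,B^{1/2}U_B$ in $\mathrm{GL}_m$ satisfies $Z_B(t)Z_B(t)^* = A \diamond_t B$; in particular $Z_B(t) \in \mathcal{F}(A \diamond_t B)$. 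Writing $Z_C(t)$ and $U_C$ for the analogous objects built from $C$, we likewise get $Z_C(t) \in \mathcal{F}(A \diamond_t C)$, so the first formula in Theorem~\ref{T:Wasserstein metric} (evaluated at the single admissible pair $(Z_B(t),Z_C(t))$ rather than at the minimizer) gives
\[
d(A \diamond_t B,\, A \diamond_t C) \;\le\; \frac{1}{\sqrt2}\,\Vert Z_B(t) - Z_C(t) \Vert_2 \;=\; \frac{t}{\sqrt2}\,\bigl\Vert B^{1/2}U_B - C^{1/2}U_C \bigr\Vert_2,
\]
the last equality because the $(1-t)A^{1/2}$ terms cancel.

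The second step is to identify $B^{1/2}U_B$. Multiplying the equality $B^{1/2}U_B = B A^{1/2}(A^{1/2}BA^{1/2})^{-1/2}$ on the left by $A^{1/2}$ yields $A^{1/2}B^{1/2}U_B = (A^{1/2}BA^{1/2})^{1/2}$, hence $B^{1/2}U_B = A^{-1/2}(A^{1/2}BA^{1/2})^{1/2} = (A^{-1}\# B)A^{1/2}$, using the defining formula $A^{-1}\# B = A^{-1/2}(A^{1/2}BA^{1/2})^{1/2}A^{-1/2}$. (As a consistency check, this recovers the cross term $t(1-t)[A(A^{-1}\# B) + (A^{-1}\# B)A]$ of \eqref{E:Wass-geodesic}.) The same identity with $C$ in place of $B$ then gives
\[
B^{1/2}U_B - C^{1/2}U_C \;=\; \bigl[(A^{-1}\# B) - (A^{-1}\# C)\bigr]\,A^{1/2}.
\]

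Finally I would combine these with submultiplicativity of the Frobenius norm against the operator norm, $\Vert XY \Vert_2 \le \Vert X \Vert_2\,\Vert Y \Vert_{\mathrm{op}}$, together with $\Vert A^{1/2} \Vert_{\mathrm{op}} = \sqrt{\lambda_1(A)} = \sqrt{\lambda}$, to conclude
\[
d(A \diamond_t B,\, A \diamond_t C) \;\le\; \frac{t}{\sqrt2}\,\sqrt{\lambda}\;\bigl\Vert (A^{-1}\# B) - (A^{-1}\# C) \bigr\Vert_2 \;=\; t\sqrt{\tfrac{\lambda}{2}}\;\bigl\Vert A^{-1}\# B - A^{-1}\# C \bigr\Vert_2.
\]

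The only real content is the identity $B^{1/2}U_B = (A^{-1}\# B)A^{1/2}$, i.e.\ recognizing which lift of $B$ into the fiber $\mathcal{F}(A)$ linearizes the $\diamond$-geodesic; this is essentially already implicit in \eqref{E:Wass-geodesic}, so the step one might worry about --- namely checking that $Z_B(t)$ genuinely belongs to $\mathcal{F}(A\diamond_t B)$ --- costs nothing. Everything else is the elementary upper bound obtained by not minimizing in Theorem~\ref{T:Wasserstein metric}, plus the one operator-norm estimate; no use of the triangle inequality for $d$ is needed.
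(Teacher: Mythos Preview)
Your argument follows the same route as the paper's: lift $A\diamond_t B$ and $A\diamond_t C$ to the explicit points $Z_B(t),Z_C(t)\in\mathcal{F}(\cdot)$ coming from the submersion, bound $d$ by $\tfrac{1}{\sqrt2}\|Z_B(t)-Z_C(t)\|_2$ via Theorem~\ref{T:Wasserstein metric}, and rewrite $B^{1/2}U_B=(A^{-1}\#B)A^{1/2}$ so the difference factors through $A^{1/2}$. Your derivation of that last identity is a bit more direct than the paper's (which goes through the Riccati equation and Lemma~\ref{L:Geomean}(7)), but the content is the same.

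The one substantive difference is the final estimate. The paper uses Frobenius submultiplicativity $\|XY\|_2\le\|X\|_2\|Y\|_2$ and then asserts $\|A^{1/2}\|_2^2=\sum_i\lambda_i(A)\le\lambda_1(A)$, which is false for $m\ge 2$ (the sum of the eigenvalues of a positive definite matrix exceeds the largest one). Your choice to bound instead with the mixed inequality $\|XY\|_2\le\|X\|_2\|Y\|_{\mathrm{op}}$ and $\|A^{1/2}\|_{\mathrm{op}}=\sqrt{\lambda_1(A)}$ is exactly what is needed to make the stated constant $t\sqrt{\lambda/2}$ come out correctly, so your version actually repairs a slip in the paper's own proof.
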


\begin{proof}
Note that $A \diamond_{t} B = Z(t) Z(t)^{*}$, where $Z(t) = (1-t) A^{1/2} + t B^{1/2} U$ for $0 \leq t \leq 1$ with a unitary matrix $U = B^{1/2} A^{1/2} (A^{1/2} B A^{1/2})^{-1/2}$. So $Z(t) = (1-t) A^{1/2} + t (A^{-1} \# B) A^{1/2} \in \mathcal{F}(A \diamond_{t} B)$, since
$$ U = B^{1/2} A^{1/2} (A^{-1/2} B^{-1} A^{-1/2})^{1/2} = B^{1/2} (A \# B^{-1}) A^{-1/2}, $$
and so $B^{1/2} U = B (A \# B^{-1}) A^{-1/2} = (A \# B^{-1})^{-1} A^{1/2} = (A^{-1} \# B) A^{1/2}$ by the Riccati equation and Lemma \ref{L:Geomean} (7). Similarly, $A \diamond_{t} C = Y(t) Y(t)^{*}$, where $Y(t) = (1-t) A^{1/2} + t (A^{-1} \# C) A^{1/2} \in \mathcal{F}(A \diamond_{t} C)$ for $0 \leq t \leq 1$. Therefore, from the first expression in Theorem \ref{T:Wasserstein metric}
\begin{displaymath}
\begin{split}
d(A \diamond_{t} B, A \diamond_{t} C) & \leq \frac{1}{\sqrt{2}} \Vert Z(t) - Y(t) \Vert_{2} \\
& = \frac{t}{\sqrt{2}} \Vert (A^{-1} \# B) A^{1/2} - (A^{-1} \# C) A^{1/2} \Vert_{2} \\
& \leq \frac{t}{\sqrt{2}} \Vert A^{1/2} \Vert_{2} \cdot \Vert A^{-1} \# B - A^{-1} \# C \Vert_{2} \\
& \leq t \sqrt{\frac{\lambda}{2}} \Vert A^{-1} \# B - A^{-1} \# C \Vert_{2}.
\end{split}
\end{displaymath}
The second inequality follows from the sub-multiplicative property of the Frobenius norm: see Section 5.6 in \cite{HJ}, and the last inequality follows from the fact that
\begin{displaymath}
\Vert A^{1/2} \Vert_{2}^{2} = \sum_{i=1}^{m} \lambda_{i}(A) \leq \lambda_{1}(A),
\end{displaymath}
where $\lambda_{1}(A), \dots, \lambda_{m}(A)$ are positive eigenvalues of $A$ in the decreasing order.
\end{proof}

\section{Wasserstein mean}

Let $\mathbb{A} = (A_{1}, \dots, A_{n}) \in \mathbb{P}_{m}^{n}$, and let $\omega = (w_{1}, \dots, w_{n}) \in \Delta_{n}$, the simplex of all positive probability vectors in $\mathbb{R}^{n}$. We consider the following minimization problem
\begin{equation} \label{E:minimization}
\underset{X \in \mathbb{P}_{m}}{\arg \min} \sum_{j=1}^{n} w_{j} d^{2}(X, A_{j}).
\end{equation}
By using tools from non-smooth analysis, convex duality, and the optimal transport theory, it has been proved in Theorem 6.1, \cite{AC} that the above minimization problem has a unique solution in $\mathbb{P}_{m}$. On the other hand, it has been shown in \cite{BJL} that the objective function $\displaystyle f(X) = \sum_{j=1}^{n} w_{j} d^{2}(X, A_{j})$ is strictly convex, by applying the strict concavity of the map $h: \mathbb{P}_{m} \to \mathbb{R}, \ h(X) = \mathrm{Tr} (X^{1/2})$. Therefore, we define such a unique minimizer of \eqref{E:minimization} as the \emph{Wasserstein mean}, denoted by $\Omega(\omega; \mathbb{A})$. That is,
\begin{equation} \label{E:Wasserstein mean}
\Omega(\omega; \mathbb{A}) = \underset{X \in \mathbb{P}}{\arg \min} \sum_{j=1}^{n} w_{j} d^{2}(X, A_{j}).
\end{equation}

To find the unique minimum of objective function $f: \mathbb{P}_{m} \to \mathbb{R}$, we evaluate the derivative $D f(X)$ and set it equal to zero.
By using matrix differential calculus, we have the following.
\begin{theorem} \cite[Theorem 8]{BJL} \label{T:WassEq}
The Wasserstein mean $\Omega(\omega; \mathbb{A})$ is a unique solution $X \in \mathbb{P}_{m}$ of the nonlinear matrix equation
\begin{equation} \label{E:Wass}
I = \sum_{j=1}^{n} w_{j} (A_{j} \# X^{-1}),
\end{equation}
equivalently,
\begin{displaymath}
X = \sum_{j=1}^{n} w_{j} (X^{1/2} A_{j} X^{1/2})^{1/2}.
\end{displaymath}
\end{theorem}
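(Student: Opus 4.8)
The plan is to characterize $\Omega(\omega;\mathbb{A})$ as the unique critical point of the objective and then rewrite the critical‑point equation as \eqref{E:Wass}. Since $f(X)=\sum_{j=1}^{n}w_{j}d^{2}(X,A_{j})$ is strictly convex on the open convex cone $\mathbb{P}_{m}$ and attains its minimum (as recalled above), that minimizer is the unique $X\in\mathbb{P}_{m}$ with $Df(X)=0$: strict convexity makes $Df$ injective on $\mathbb{P}_{m}$, so there is at most one critical point, and the minimizer is one. Writing $d^{2}(X,A_{j})=\tfrac12\tr X+\tfrac12\tr A_{j}-g_{j}(X)$ with $g_{j}(X):=\tr(X^{1/2}A_{j}X^{1/2})^{1/2}$, the linear and constant pieces contribute $\tfrac12\tr H$ to $Df(X)[H]$, so everything reduces to showing
\[
Dg_{j}(X)[H]=\tfrac12\,\tr\big((A_{j}\#X^{-1})\,H\big)\qquad(H\in\mathbb{H}_{m}).
\]
Granting this, $Df(X)[H]=\tfrac12\tr\big(\big(I-\sum_{j}w_{j}(A_{j}\#X^{-1})\big)H\big)$ vanishes for all Hermitian $H$ exactly when $I=\sum_{j}w_{j}(A_{j}\#X^{-1})$, since $(P,H)\mapsto\tr(PH)$ is a nondegenerate real pairing on $\mathbb{H}_{m}$.

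To compute $Dg_{j}$ I would use the variational formula behind Theorem \ref{T:Wasserstein metric}. Since $(A_{j}^{1/2}X^{1/2})^{*}(A_{j}^{1/2}X^{1/2})=X^{1/2}A_{j}X^{1/2}$, we have $g_{j}(X)=\tr\,|A_{j}^{1/2}X^{1/2}|=\max_{U\in\mathbb{U}_{m}}\mathrm{Re}\,\tr(U^{*}A_{j}^{1/2}X^{1/2})$, and since $A_{j}^{1/2}X^{1/2}$ is invertible the maximum is attained at the \emph{unique} unitary $U_{j}$ from the polar decomposition $A_{j}^{1/2}X^{1/2}=U_{j}(X^{1/2}A_{j}X^{1/2})^{1/2}$. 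For each fixed $U$ the map $X\mapsto\mathrm{Re}\,\tr(U^{*}A_{j}^{1/2}X^{1/2})$ is smooth on $\mathbb{P}_{m}$, so by the envelope (Danskin) theorem — the maximizer being unique — $g_{j}$ is differentiable at $X$ with $Dg_{j}(X)[H]=\mathrm{Re}\,\tr(U_{j}^{*}A_{j}^{1/2}L)$, where $L:=D(X^{1/2})[H]\in\mathbb{H}_{m}$ is the unique solution of the Sylvester equation $LX^{1/2}+X^{1/2}L=H$.

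The computation then collapses via two identities. First, $A_{j}^{1/2}X^{1/2}=U_{j}(X^{1/2}A_{j}X^{1/2})^{1/2}$ gives $U_{j}^{*}A_{j}^{1/2}=(X^{1/2}A_{j}X^{1/2})^{1/2}X^{-1/2}$, so with $B_{j}:=(X^{1/2}A_{j}X^{1/2})^{1/2}\in\mathbb{P}_{m}$,
\[
Dg_{j}(X)[H]=\mathrm{Re}\,\tr(B_{j}X^{-1/2}L)=\tfrac12\,\tr\big(L\,(X^{-1/2}B_{j}+B_{j}X^{-1/2})\big),
\]
using that $L,X^{-1/2},B_{j}$ are Hermitian. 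Second, Lemma \ref{L:Geomean}(6) and Lemma \ref{L:Geomean}(1) give $X^{1/2}(A_{j}\#X^{-1})X^{1/2}=(X^{1/2}A_{j}X^{1/2})\#(X^{1/2}X^{-1}X^{1/2})=(X^{1/2}A_{j}X^{1/2})\#I=B_{j}$, i.e. $A_{j}\#X^{-1}=X^{-1/2}B_{j}X^{-1/2}$, whence $X^{-1/2}B_{j}+B_{j}X^{-1/2}=(A_{j}\#X^{-1})X^{1/2}+X^{1/2}(A_{j}\#X^{-1})$. Substituting and using cyclicity of the trace together with $X^{1/2}L+LX^{1/2}=H$ yields $Dg_{j}(X)[H]=\tfrac12\tr\big((A_{j}\#X^{-1})H\big)$, as required. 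Summing over $j$ shows that $\Omega(\omega;\mathbb{A})$ is the unique solution of $I=\sum_{j}w_{j}(A_{j}\#X^{-1})$; applying the congruence $Y\mapsto X^{1/2}YX^{1/2}$ to this identity (again Lemma \ref{L:Geomean}(6), with $P\#I=P^{1/2}$) turns it into $X=\sum_{j}w_{j}(X^{1/2}A_{j}X^{1/2})^{1/2}$, and the step is reversible since $X^{1/2}$ is invertible.

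The step I expect to be the main obstacle is the rigorous differentiation of $g_{j}$: one must justify the envelope theorem (existence of the maximum over the compact group $\mathbb{U}_{m}$, smoothness in $X$, and — crucially — uniqueness of the maximizer, so that the directional derivative is linear in $H$), and one must carefully track the non‑commuting factors produced by $D(X^{1/2})[H]$. It is precisely the Sylvester identity $LX^{1/2}+X^{1/2}L=H$ together with the congruence identity $A_{j}\#X^{-1}=X^{-1/2}(X^{1/2}A_{j}X^{1/2})^{1/2}X^{-1/2}$ that lets the seemingly unwieldy expression collapse to the clean form $\tfrac12\tr((A_{j}\#X^{-1})H)$; the remaining ingredients (strict convexity, the nondegenerate trace pairing, the final congruence) are routine.
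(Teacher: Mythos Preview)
The paper does not prove this theorem at all; it is quoted from \cite{BJL} with only the remark ``By using matrix differential calculus, we have the following.'' There is therefore nothing in the present paper to compare your argument against.

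Your proof is correct. The logic --- strict convexity of $f$ forces the unique minimizer to be the unique critical point, and the critical-point equation is obtained by differentiating $g_{j}(X)=\tr(X^{1/2}A_{j}X^{1/2})^{1/2}$ --- is exactly what ``matrix differential calculus'' means here, so your approach is presumably close in spirit to \cite{BJL}. The envelope/Danskin device you use to compute $Dg_{j}$ is neat and the subsequent collapse via the Sylvester identity and the congruence $A_{j}\#X^{-1}=X^{-1/2}(X^{1/2}A_{j}X^{1/2})^{1/2}X^{-1/2}$ is carried out correctly. One small remark: you can bypass Danskin entirely, since $g_{j}$ is manifestly smooth on $\mathbb{P}_{m}$ and the elementary identity $\frac{d}{dt}\tr Z(t)^{1/2}=\tfrac12\tr\bigl(Z(t)^{-1/2}Z'(t)\bigr)$ (from $Y^{2}=Z\Rightarrow Y'Y+YY'=Z'\Rightarrow 2\tr Y'=\tr(Y^{-1}Z')$) gives $Dg_{j}(X)[H]=\tfrac12\tr\bigl(B_{j}^{-1}(LA_{j}X^{1/2}+X^{1/2}A_{j}L)\bigr)$, which reduces by the same congruence and Sylvester steps to $\tfrac12\tr\bigl((A_{j}\#X^{-1})H\bigr)$. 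Either route yields the same computation; your version has the advantage of making the geometric meaning (polar factor as maximizer) visible.
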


We see some interesting properties of the Wasserstein mean. For given $\mathbb{A} = (A_{1}, \dots, A_{n}) \in \mathbb{P}_{m}^{n}$, any permutation $\sigma$ on $\{ 1, \dots, n \}$, and any $M \in \mathrm{GL}_{m}$, we denote as
\begin{displaymath}
\begin{split}
\mathbb{A}_{\sigma} & = (A_{\sigma(1)}, \dots, A_{\sigma(n)}) \in \mathbb{P}_{m}^{n}, \\
M \mathbb{A} M^{*} & = (M A_{1} M^{*}, \dots, M A_{n} M^{*}) \in \mathbb{P}_{m}^{n}, \\
\mathbb{A}^{k} & = (\underline{A_{1}, \dots, A_{n}}, \dots,
\underline{A_{1}, \dots, A_{n}}) \in \mathbb{P}_{m}^{nk},
\end{split}
\end{displaymath}
where the number of blocks in the last expression is $k$. For given $\omega = (w_{1}, \dots, w_{n}) \in \Delta_{n}$, we also denote as
\begin{displaymath}
\begin{split}
\omega_{\sigma} & = (w_{\sigma(1)}, \dots, w_{\sigma(n)}) \in \Delta_{n}, \\
\omega^{k} & = \frac{1}{k} (\underline{w_{1}, \dots, w_{n}}, \dots,
\underline{w_{1}, \dots, w_{n}}) \in \Delta_{nk}.
\end{split}
\end{displaymath}

\begin{proposition}
Let $\mathbb{A} = (A_{1}, \dots, A_{n}) \in \mathbb{P}_{m}^{n}$, and let $\omega = (w_{1}, \dots, w_{n}) \in \Delta_{n}$. Then the following are satisfied.
\begin{itemize}
\item[(1)] $($Homogeneity$)$ \ $\Omega(\omega; \alpha \mathbb{A}) = \alpha \Omega(\omega; \mathbb{A})$ for any $\alpha > 0$.

\item[(2)] $($Permutation invariancy$)$ \ $\Omega(\omega_{\sigma}; \mathbb{A}_{\sigma}) = \Omega(\omega; \mathbb{A})$ for any permutation $\sigma$ on $\{ 1, \dots, n \}$.

\item[(3)] $($Repetition invariancy$)$ \ $\Omega(\omega^{k}; \mathbb{A}^{k}) = \Omega(\omega; \mathbb{A})$ for any $k \in \mathbb{N}$.

\item[(4)] $($Unitary congruence invariancy$)$ \ $\Omega(\omega; U \mathbb{A} U^{*}) = U \Omega(\omega; \mathbb{A}) U^{*}$ for any $U \in \mathbb{U}_{m}$.
\end{itemize}
\end{proposition}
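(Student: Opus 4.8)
The plan is to lean on the uniqueness built into Theorem~\ref{T:WassEq} (equivalently, the uniqueness of the minimizer in \eqref{E:Wasserstein mean}) for all four parts: in each case I will exhibit an explicit candidate, check that it satisfies the characterizing equation \eqref{E:Wass} or realizes the minimum of the appropriate objective function, and then invoke uniqueness. Throughout I will freely use the square-root identities $(\alpha A)^{1/2} = \alpha^{1/2} A^{1/2}$ for $\alpha > 0$ and $(U A U^{*})^{1/2} = U A^{1/2} U^{*}$ for $U \in \mathbb{U}_{m}$, both of which follow from the uniqueness of the positive semidefinite square root.

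For (1) I would first record the scaling identity $d^{2}(\alpha X, \alpha A) = \alpha\, d^{2}(X, A)$, which is immediate from the defining formula for $d$ together with $\big((\alpha X)^{1/2}(\alpha A)(\alpha X)^{1/2}\big)^{1/2} = \alpha\,(X^{1/2} A X^{1/2})^{1/2}$; hence the objective functions satisfy $f_{\alpha \mathbb{A}}(\alpha X) = \alpha\, f_{\mathbb{A}}(X)$, and since $X \mapsto \alpha X$ is a bijection of $\mathbb{P}_{m}$ the unique minimizer scales by $\alpha$. Alternatively, writing $Y := \Omega(\omega; \mathbb{A})$ so that $Y = \sum_{j} w_{j}(Y^{1/2} A_{j} Y^{1/2})^{1/2}$, one checks directly that $\alpha Y = \sum_{j} w_{j}\big((\alpha Y)^{1/2}(\alpha A_{j})(\alpha Y)^{1/2}\big)^{1/2}$, so $\alpha Y$ solves the equation for the tuple $\alpha \mathbb{A}$ and equals $\Omega(\omega; \alpha \mathbb{A})$ by uniqueness.

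For (2) and (3) the point is simply that the objective function is literally unchanged. For a permutation $\sigma$ one has $\sum_{j} w_{\sigma(j)} d^{2}(X, A_{\sigma(j)}) = \sum_{j} w_{j} d^{2}(X, A_{j})$, since this is a sum over the same index set; and for the $k$-fold repetition, $\sum_{\text{$k$ blocks}} \tfrac{1}{k}\sum_{j} w_{j} d^{2}(X, A_{j}) = \sum_{j} w_{j} d^{2}(X, A_{j})$. As the two minimization problems then have the same objective on the same domain $\mathbb{P}_{m}$, their unique minimizers coincide.

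For (4) I would use that unitary congruence is an isometry of $d$, i.e.\ $d(U X U^{*}, U A U^{*}) = d(X, A)$, which follows from $\tr(U(\cdot)U^{*}) = \tr(\cdot)$ and the identity $(U A U^{*})^{1/2} = U A^{1/2} U^{*}$ applied inside the fidelity term. Consequently $f_{U \mathbb{A} U^{*}}(U X U^{*}) = f_{\mathbb{A}}(X)$, and since $X \mapsto U X U^{*}$ is a bijection of $\mathbb{P}_{m}$, the minimizer of $f_{U \mathbb{A} U^{*}}$ is $U\, \Omega(\omega; \mathbb{A})\, U^{*}$. A second, equation-based route is to apply Lemma~\ref{L:Geomean}(6): conjugating $I = \sum_{j} w_{j}(A_{j} \# X^{-1})$ by $U$ gives $I = U I U^{*} = \sum_{j} w_{j}\big((U A_{j} U^{*}) \# (U X U^{*})^{-1}\big)$, so $U X U^{*}$ solves the defining equation for $U \mathbb{A} U^{*}$. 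I do not expect any genuine obstacle here; the only points requiring care are the square-root identities above and, in part (1), remembering that it is $d^{2}$ (not $d$) that scales linearly in $\alpha$.
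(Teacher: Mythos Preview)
Your proposal is correct and aligns with the paper's proof: parts (2) and (3) are handled identically via the invariance of the objective function, and for (1) and (4) your equation-based alternatives (using the characterizing equation of Theorem~\ref{T:WassEq} together with uniqueness) match the paper's arguments, the only cosmetic difference being that the paper works with the form $I=\sum_j w_j(A_j\#X^{-1})$ rather than $X=\sum_j w_j(X^{1/2}A_jX^{1/2})^{1/2}$. Your additional metric-based arguments for (1) and (4) (via the scaling $d^{2}(\alpha X,\alpha A)=\alpha\,d^{2}(X,A)$ and the unitary invariance of $d$) are not in the paper but are also correct.
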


\begin{proof}
Items (2) and (3) follows from the definition \eqref{E:Wasserstein mean} of Wasserstein mean.
\begin{itemize}
\item[(1)] Let $X = \Omega(\omega; \alpha \mathbb{A})$ for any $\alpha > 0$. By Theorem \ref{T:WassEq}
$$ \displaystyle I = \sum_{j=1}^{n} w_{j} (\alpha A_{j}) \# X^{-1} = \sum_{j=1}^{n} w_{j} A_{j} \# (\alpha^{-1} X)^{-1}. $$
By Theorem \ref{T:WassEq} $\alpha^{-1} X = \Omega(\omega; \mathbb{A})$, which implies the desired identity.

\item[(4)] Let $X = \Omega(\omega; U \mathbb{A} U^{*})$ for any $U \in \mathbb{U}_{m}$. By Theorem \ref{T:WassEq} $\displaystyle I = \sum_{j=1}^{n} w_{j} (U A_{j} U^{*} \# X^{-1})$. Taking the congruence transformation by $U^{*} \in \mathbb{U}_{m}$ on both sides and applying Lemma \ref{L:Geomean} (6)
\begin{displaymath}
I = \sum_{j=1}^{n} w_{j} (A_{j} \# U^{*} X^{-1} U) = \sum_{j=1}^{n} w_{j} (A_{j} \# (U^{*} X U)^{-1}).
\end{displaymath}
By Theorem \ref{T:WassEq}, we obtain $U^{*} X U = \Omega(\omega; \mathbb{A})$, that is, $\Omega(\omega; U \mathbb{A} U^{*}) = U \Omega(\omega; \mathbb{A}) U^{*}$.
\end{itemize}
\end{proof}

\begin{remark}
Let
\[
A =
 \left[ {\begin{array}{cc}
  1 & 2 \\
  2 & 5 \\
 \end{array} } \right]
,
\qquad
B =
 \left[ {\begin{array}{cc}
  4 & 4\\
  4 & 5\\
 \end{array} } \right].
\]
One can see easily that $A, B$ are positive definite and $A B \neq B A$. The Wasserstein mean $\Omega \left( \frac{1}{2} , \frac{1}{2} ; A , B \right) = A \diamond B$ and the Riemannian mean $\Lambda \left( \frac{1}{2} , \frac{1}{2} ; A , B \right) = A \# B$ of positive definite matrices $A$ and $B$, respectively, are
\[
   \Omega \left( \frac{1}{2},\frac{1}{2} ; A , B \right)=
 \frac{1}{4} \left[ {\begin{array}{cc}
   9 & 12 \\
   12 & 20 \\
  \end{array} } \right]
,
\qquad
 \Lambda \left( \frac{1}{2} , \frac{1}{2} ; A , B \right)  =
\left[{\begin{array}{cc}
1.6641 & 2.2188\\
2.2188 & 4.1603\\
\end{array} } \right].
\]
Then their determinants are
\begin{displaymath}
\det \left[ \Omega \left( \frac{1}{2} , \frac{1}{2}; A , B \right) \right] = 2.25 > 2 = \det \left[ \Lambda \left( \frac{1}{2} , \frac{1}{2} ; A , B \right) \right].
\end{displaymath}
In general, $\displaystyle \det \Omega(\omega; \mathbb{A}) \neq \prod_{j=1}^{n} (\det A_{j})^{w_{j}} = \det \Lambda(\omega; \mathbb{A})$. The following shows the inequality between determinants of the Wasserstein mean and the Cartan mean.
\end{remark}

It is known from Theorem 7.6.6 in \cite{HJ} that the map $f: \mathbb{P}_{m} \to \mathbb{R}, \ f(A) = \log \det A$ is strictly concave: for any $A, B \in \mathbb{P}_{m}$ and $t \in [0,1]$
\begin{displaymath}
\log \det ( (1-t) A + t B ) \geq (1-t) \log \det A + t \log \det B,
\end{displaymath}
where equality holds if and only if $A = B$. By induction together with this property, we have
\begin{lemma} \label{L:logdet}
Let $A_{1}, \dots, A_{n} \in \mathbb{P}_{m}$, and let $\omega = (w_{1}, \dots, w_{n}) \in \Delta_{n}$. Then
\begin{displaymath}
\log \det \left( \sum_{j=1}^{n} w_{j} A_{j} \right) \geq \sum_{j=1}^{n} w_{j} \log \det A_{j},
\end{displaymath}
where equality holds if and only if $A_{1} = \cdots = A_{n}$.
\end{lemma}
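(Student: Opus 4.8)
The plan is to proceed by induction on the number $n$ of matrices, using the strict concavity of $A \mapsto \log\det A$ displayed just above both as the base case and as the driving step of the induction. For $n = 1$ the statement is a triviality (equality always), and for $n = 2$ it is precisely the stated inequality with $t = w_{2}$, whose equality case $A_{1} = A_{2}$ is exactly the one quoted from \cite[Theorem 7.6.6]{HJ}.

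For the inductive step I would assume the full statement (inequality together with the equality characterization) for any collection of $n-1$ matrices. Given $A_{1}, \dots, A_{n} \in \mathbb{P}_{m}$ and $\omega = (w_{1}, \dots, w_{n}) \in \Delta_{n}$, set $s := w_{1} + \cdots + w_{n-1} = 1 - w_{n} \in (0,1)$ and $B := \sum_{j=1}^{n-1} (w_{j}/s) A_{j} \in \mathbb{P}_{m}$, noting that $(w_{1}/s, \dots, w_{n-1}/s) \in \Delta_{n-1}$. Writing $\sum_{j=1}^{n} w_{j} A_{j} = s B + w_{n} A_{n}$ and applying the $n = 2$ case gives
\begin{displaymath}
\log\det\left( \sum_{j=1}^{n} w_{j} A_{j} \right) = \log\det( s B + w_{n} A_{n} ) \geq s \log\det B + w_{n} \log\det A_{n},
\end{displaymath}
while the inductive hypothesis applied to $B$ gives $\log\det B \geq \sum_{j=1}^{n-1} (w_{j}/s) \log\det A_{j}$. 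Multiplying the latter by $s$ and substituting into the former yields $\log\det\left( \sum_{j=1}^{n} w_{j} A_{j} \right) \geq \sum_{j=1}^{n} w_{j} \log\det A_{j}$, as desired.

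For the equality case, suppose equality holds in the conclusion; then both inequalities in the argument above must be equalities. Equality in the $n = 2$ step forces $B = A_{n}$, and equality in the inductive hypothesis forces $A_{1} = \cdots = A_{n-1}$, hence $B = A_{1}$ and therefore $A_{1} = \cdots = A_{n}$. The converse is immediate, since when all the $A_{j}$ coincide both sides reduce to $\log\det A_{1}$. The only point needing a little care here is not the inequality, which is a routine telescoping induction, but threading the equality characterization through the induction so that the equality conditions of the $n = 2$ application and of the inductive hypothesis combine correctly; I do not anticipate any genuine obstacle.
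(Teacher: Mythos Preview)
Your proposal is correct and follows exactly the approach the paper indicates: the paper does not give a detailed proof but simply states ``By induction together with this property'' (the strict concavity of $A \mapsto \log\det A$ from \cite[Theorem~7.6.6]{HJ}), and your argument is precisely that induction carried out in full, including the equality characterization.
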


The following shows the determinantal inequality between the Wasserstein mean and the Riemannian mean.

\begin{theorem}
Let $\mathbb{A} = (A_{1}, \dots, A_{n}) \in \mathbb{P}_{m}^{n}$, and let $\omega = (w_{1}, \dots, w_{n}) \in \Delta_{n}$. Then
\begin{equation} \label{E:logdet}
\det \Omega(\omega; \mathbb{A}) \geq \prod_{j=1}^{n} (\det A_{j})^{w_{j}},
\end{equation}
where equality holds if and only if $A_{1} = \cdots = A_{n}$.
\end{theorem}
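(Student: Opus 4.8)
The plan is to start from the fixed-point characterization of the Wasserstein mean in Theorem \ref{T:WassEq}, take logarithmic determinants, and apply the concavity Lemma \ref{L:logdet}; a small algebraic twist (the quantity $\log\det X$ will appear on both sides and must be cancelled) then yields the inequality.

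Concretely, write $X = \Omega(\omega; \mathbb{A})$, so that by Theorem \ref{T:WassEq}
\[
X = \sum_{j=1}^{n} w_{j} \left( X^{1/2} A_{j} X^{1/2} \right)^{1/2}.
\]
Set $B_{j} := \left( X^{1/2} A_{j} X^{1/2} \right)^{1/2} \in \mathbb{P}_{m}$. Taking $\log\det$ of both sides and applying Lemma \ref{L:logdet} to $B_{1}, \dots, B_{n}$ gives
\[
\log \det X = \log \det \left( \sum_{j=1}^{n} w_{j} B_{j} \right) \geq \sum_{j=1}^{n} w_{j} \log \det B_{j}.
\]
Now compute $\det B_{j} = \big( \det(X^{1/2} A_{j} X^{1/2}) \big)^{1/2} = (\det X)^{1/2} (\det A_{j})^{1/2}$, so $\log\det B_{j} = \tfrac{1}{2}\log\det X + \tfrac{1}{2}\log\det A_{j}$. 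Substituting and using $\sum_j w_j = 1$,
\[
\log \det X \geq \frac{1}{2} \log \det X + \frac{1}{2} \sum_{j=1}^{n} w_{j} \log \det A_{j},
\]
and cancelling $\tfrac{1}{2}\log\det X$ from both sides yields $\log\det X \geq \sum_{j=1}^{n} w_{j}\log\det A_{j}$, which is exactly \eqref{E:logdet} after exponentiating.

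For the equality case: if $A_{1} = \cdots = A_{n} = A$, then $X = A$ solves the Wasserstein equation, and both sides of \eqref{E:logdet} equal $\det A$. Conversely, if equality holds in \eqref{E:logdet}, then equality holds in the application of Lemma \ref{L:logdet}, forcing $B_{1} = \cdots = B_{n}$; since $X$ is invertible we recover $A_{j} = X^{-1/2} B_{j}^{2} X^{-1/2}$, so all the $A_{j}$ coincide. I do not anticipate a serious obstacle here — the only point requiring care is recognizing that $\log\det X$ appears on the right-hand side (through $\det B_j$) and must be cancelled, and correctly transporting the equality condition of Lemma \ref{L:logdet} back through the congruence $A_j \mapsto X^{1/2}A_j X^{1/2}$.
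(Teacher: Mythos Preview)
Your proof is correct and follows essentially the same approach as the paper: start from the fixed-point equation in Theorem \ref{T:WassEq}, take $\log\det$, apply Lemma \ref{L:logdet}, and read off the equality case. The only cosmetic difference is that the paper works with the equivalent form $I = \sum_{j} w_{j} (A_{j} \# X^{-1})$ (so the left side becomes $0$ and Lemma \ref{L:Geomean} (9) gives the determinant of each summand), whereas you use $X = \sum_{j} w_{j} (X^{1/2} A_{j} X^{1/2})^{1/2}$; since these two equations differ by a congruence by $X^{1/2}$, the arguments are line-for-line translations of one another.
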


\begin{proof}
Let $X = \Omega(\omega; \mathbb{A})$. Then by Theorem \ref{T:WassEq} $\displaystyle I = \sum_{j=1}^{n} w_{j} (A_{j} \# X^{-1})$, and by Lemma \ref{L:logdet}
\begin{displaymath}
\begin{split}
0 = \log \det \left[ \sum_{j=1}^{n} w_{j} (A_{j} \# X^{-1}) \right]
& \geq \sum_{j=1}^{n} w_{j} \log \det (A_{j} \# X^{-1}) \\
& = \frac{1}{2} \sum_{j=1}^{n} w_{j} \log \det A_{j} - \frac{1}{2} \log \det X.
\end{split}
\end{displaymath}
The last equality follows from Lemma \ref{L:Geomean} (9). It implies
\begin{displaymath}
\log \det X \geq \sum_{j=1}^{n} w_{j} \log \det A_{j} = \log \left[ \prod_{j=1}^{n} ( \det A_{j} )^{w_{j}} \right].
\end{displaymath}
Taking the exponential function on both sides and applying the fact that the exponential function from $\mathbb{R}$ to $(0, \infty)$ is monotone increasing, we obtain the desired inequality.

Moreover, the equality of \eqref{E:logdet} holds if and only if $A_{i} \# X^{-1} = A_{j} \# X^{-1}$ for all $i$ and $j$. By Lemma \ref{L:Geomean} (3), $X^{-1} \# A_{i} = X^{-1} \# A_{j}$, and by the definition of geometric mean it is equivalent to $A_{i} = A_{j}$ for all $i$ and $j$.
\end{proof}

\section{Bounds for the Wasserstein mean}

The Wasserstein mean satisfies the arithmetic-Wasserstein mean inequality.
\begin{theorem} \cite[Theorem 9]{BJL} \label{T:arith-Wass}
Let $\mathbb{A} = (A_{1}, \dots, A_{n}) \in \mathbb{P}_{m}^{n}$ and let $\omega = (w_{1}, \dots, w_{n}) \in \Delta_{n}$. Then
\begin{displaymath}
\Omega(\omega; \mathbb{A}) \leq \sum_{j=1}^{n} w_{j} A_{j}.
\end{displaymath}
\end{theorem}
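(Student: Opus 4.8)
The plan is to leverage the nonlinear matrix equation characterizing the Wasserstein mean (Theorem~\ref{T:WassEq}) and reduce the inequality, via a congruence rewriting, to an elementary operator ``variance'' inequality.

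First I would set $X = \Omega(\omega; \mathbb{A})$ and invoke Theorem~\ref{T:WassEq} to get $I = \sum_{j=1}^{n} w_{j} (A_{j} \# X^{-1})$. For each $j$ put $G_{j} := A_{j} \# X^{-1} \in \mathbb{P}_{m}$. Since the geometric mean $A_{j} \# X^{-1}$ is by definition the unique positive solution of the Riccati equation $G_{j} A_{j}^{-1} G_{j} = X^{-1}$, one rearranges to $A_{j} = G_{j} X G_{j}$, and the equation from Theorem~\ref{T:WassEq} becomes simply $\sum_{j=1}^{n} w_{j} G_{j} = I$. Next I linearize: set $K_{j} := G_{j} X^{1/2}$. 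Because $G_{j}$ and $X^{1/2}$ are Hermitian, $K_{j} K_{j}^{*} = G_{j} X^{1/2} X^{1/2} G_{j} = G_{j} X G_{j} = A_{j}$, while $\sum_{j=1}^{n} w_{j} K_{j} = \bigl( \sum_{j=1}^{n} w_{j} G_{j} \bigr) X^{1/2} = X^{1/2}$.

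Writing $\bar{K} := \sum_{j=1}^{n} w_{j} K_{j} = X^{1/2}$, the target inequality $X \leq \sum_{j} w_{j} A_{j}$ now reads exactly $\bar{K} \bar{K}^{*} \leq \sum_{j=1}^{n} w_{j} K_{j} K_{j}^{*}$, which follows from the identity
\[
\sum_{j=1}^{n} w_{j} K_{j} K_{j}^{*} - \bar{K} \bar{K}^{*} = \sum_{j=1}^{n} w_{j} (K_{j} - \bar{K})(K_{j} - \bar{K})^{*} \geq 0
\]
(a matrix Jensen inequality for the operator-convex map $K \mapsto K K^{*}$). Tracing back, $\sum_{j} w_{j} A_{j} = \sum_{j} w_{j} K_{j} K_{j}^{*} \geq \bar{K} \bar{K}^{*} = X^{1/2} X^{1/2} = X$, which is the claim. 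If one also wants the equality case, $\sum_{j} w_{j} (K_{j} - \bar K)(K_{j} - \bar K)^{*} = 0$ forces $K_{j} = \bar K$, i.e. $G_{j} = I$, hence $A_{j} = X$ for every $j$, so equality holds precisely when $A_{1} = \cdots = A_{n}$.

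The only non-bookkeeping step is the recognition that, after the substitutions $A_{j} = G_{j} X G_{j}$ and $K_{j} = G_{j} X^{1/2}$, the constraint $\sum_{j} w_{j} G_{j} = I$ converts the inequality into the one-line operator variance inequality above; everything else is routine manipulation with the Riccati equation. The point worth flagging is that the congruence rewriting is genuinely needed: the naive approach of bounding $A_{j} \# X^{-1} \leq \tfrac{1}{2}(A_{j} + X^{-1})$ by the arithmetic--geometric mean inequality of Lemma~\ref{L:Geomean}(10) only produces the weaker estimate $2I - X^{-1} \leq \sum_{j} w_{j} A_{j}$ (and $2I - X^{-1} \leq X$ always), so it does not give the desired bound.
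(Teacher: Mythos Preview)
Your argument is correct. The key substitution $A_{j} = G_{j} X G_{j}$ (via the Riccati characterization of $A_{j}\# X^{-1}$) together with $K_{j} = G_{j} X^{1/2}$ reduces the claim to the operator variance identity, and every step checks out. The equality analysis is also right.

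Note, however, that the present paper does not supply its own proof of this statement: Theorem~\ref{T:arith-Wass} is simply quoted from \cite[Theorem~9]{BJL} and used as an input to the later results. So there is no in-paper argument to compare yours against. For what it is worth, your proof is essentially the same as the one in \cite{BJL}: there one also starts from $X = \sum_{j} w_{j} (X^{1/2} A_{j} X^{1/2})^{1/2}$, writes each summand as $X^{1/2} T_{j} X^{1/2}$ with $T_{j} = X^{-1}\# A_{j}$ satisfying $\sum_{j} w_{j} T_{j} = I$ and $T_{j} X T_{j} = A_{j}$, and then invokes the convexity (variance) inequality $\sum_{j} w_{j} T_{j} X T_{j} \geq (\sum_{j} w_{j} T_{j}) X (\sum_{j} w_{j} T_{j}) = X$. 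Your $G_{j}$ is their $T_{j}$ and your $K_{j}$ packaging is just a cosmetic rewriting of the same convexity step. Your closing remark that the naive AM--GM bound on $A_{j}\# X^{-1}$ only yields $2I - X^{-1} \leq \sum_{j} w_{j} A_{j}$ is a useful observation and justifies why the congruence rewriting is essential.
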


\begin{proposition} \label{P:upper bound}
Let $\mathbb{A} = (A_{1}, \dots, A_{n}) \in \mathbb{P}_{m}^{n}$, and let $\omega = (w_{1}, \dots, w_{n}) \in \Delta_{n}$. Then for an operator norm $\Vert \cdot \Vert$
\begin{displaymath}
\Vert \Omega(\omega; \mathbb{A}) \Vert \leq \left( \sum_{j=1}^{n} w_{j} \Vert A_{j} \Vert^{1/2} \right)^{2}.
\end{displaymath}
\end{proposition}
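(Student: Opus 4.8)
The plan is to run the argument directly from the fixed-point characterization in Theorem~\ref{T:WassEq}. Set $X = \Omega(\omega; \mathbb{A})$, so that
\begin{displaymath}
X = \sum_{j=1}^{n} w_{j} (X^{1/2} A_{j} X^{1/2})^{1/2} .
\end{displaymath}
Applying the triangle inequality for the operator norm $\Vert \cdot \Vert$ gives
\begin{displaymath}
\Vert X \Vert \le \sum_{j=1}^{n} w_{j} \bigl\Vert (X^{1/2} A_{j} X^{1/2})^{1/2} \bigr\Vert .
\end{displaymath}

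Next I would simplify each summand using two elementary spectral facts. Since $X^{1/2} A_{j} X^{1/2}$ is positive definite, its positive square root has operator norm equal to the square root of $\Vert X^{1/2} A_{j} X^{1/2} \Vert$ (the operator norm of a positive matrix is its largest eigenvalue, and the matrix square root takes square roots of eigenvalues). Then submultiplicativity of the operator norm, together with $\Vert X^{1/2} \Vert = \Vert X \Vert^{1/2}$ (again because $X$ is positive definite), yields
\begin{displaymath}
\Vert X^{1/2} A_{j} X^{1/2} \Vert \le \Vert X^{1/2} \Vert^{2} \, \Vert A_{j} \Vert = \Vert X \Vert \, \Vert A_{j} \Vert .
\end{displaymath}
Substituting back, $\Vert X \Vert \le \Vert X \Vert^{1/2} \sum_{j=1}^{n} w_{j} \Vert A_{j} \Vert^{1/2}$. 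Since $X \in \mathbb{P}_{m}$ we have $\Vert X \Vert > 0$, so dividing by $\Vert X \Vert^{1/2}$ and squaring gives the claimed bound.

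There is essentially no obstacle: the only points needing a word of justification are the identity $\Vert Y^{1/2} \Vert = \Vert Y \Vert^{1/2}$ for $Y > 0$ and submultiplicativity of the operator norm, both standard. I would also note that this bound is genuinely sharper than what one gets from the arithmetic--Wasserstein inequality (Theorem~\ref{T:arith-Wass}), since $\bigl( \sum_{j} w_{j} \Vert A_{j} \Vert^{1/2} \bigr)^{2} \le \sum_{j} w_{j} \Vert A_{j} \Vert$ by convexity of $t \mapsto t^{2}$, so the fixed-point route rather than a monotonicity argument is what makes the estimate work.
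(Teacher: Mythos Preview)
Your proof is correct and follows essentially the same route as the paper: start from the fixed-point equation of Theorem~\ref{T:WassEq}, apply the triangle inequality, use $\Vert Y^{1/2}\Vert = \Vert Y\Vert^{1/2}$ for positive $Y$ together with submultiplicativity, and then cancel $\Vert X\Vert^{1/2}$. Your closing remark comparing the bound with the one coming from Theorem~\ref{T:arith-Wass} via convexity of $t\mapsto t^{2}$ is exactly the observation the paper makes immediately after the proposition.
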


\begin{proof}
Let $X = \Omega(\omega; \mathbb{A})$. Then by Theorem \ref{T:WassEq}, by the triangle inequality for the operator norm, by the fact that $\Vert A^{t} \Vert = \Vert A \Vert^{t}$ for any $A \in \mathbb{P}_{m}$ and $t \geq 0$, and by the sub-multiplicativity for the operator norm in \cite[Section 5.6]{HJ}
\begin{displaymath}
\begin{split}
\Vert X \Vert = \left\Vert \sum_{j=1}^{n} w_{j} \left( X^{1/2} A_{j} X^{1/2} \right)^{1/2} \right\Vert
& \leq \sum_{j=1}^{n} \left\Vert w_{j} \left( X^{1/2} A_{j} X^{1/2} \right)^{1/2} \right\Vert \\
& = \sum_{j=1}^{n} w_{j} \left\Vert  X^{1/2} A_{j} X^{1/2}  \right\Vert^{1/2} \\
& \leq \sum_{j=1}^{n} w_{j} \Vert X \Vert^{1/2} \Vert A_{j} \Vert^{1/2}.
\end{split}
\end{displaymath}
Hence, we obtain
\begin{displaymath}
\Vert \Omega(\omega; \mathbb{A}) \Vert = \Vert X \Vert \leq \left(\sum_{j=1}^{n} w_{j} \Vert A_{j} \Vert^{1/2} \right)^{2}.
\end{displaymath}
\end{proof}

\begin{remark}
By Theorem \ref{T:arith-Wass} we have
\begin{displaymath}
\Vert \Omega(\omega; \mathbb{A}) \Vert \leq \left\Vert \sum_{j=1}^{n} w_{j} A_{j} \right\Vert \leq \sum_{j=1}^{n} w_{j} \Vert A_{j} \Vert.
\end{displaymath}
Since the square map $\mathbb{R} \ni t \mapsto t^{2} \in [0, \infty)$ is convex,
\begin{displaymath}
\left( \sum_{j=1}^{n} w_{j} \Vert A_{j} \Vert^{1/2} \right)^{2} \leq \sum_{j=1}^{n} w_{j} \Vert A_{j} \Vert.
\end{displaymath}
Thus, one can see that Proposition \ref{P:upper bound} gives a sharp upper bound of the Wasserstein mean for the operator norm.
\end{remark}

Unfortunately, the Wasserstein mean does not satisfy the Wasserstein-harmonic mean inequality: see Section 5 in \cite{BJL}. However, we give the lower bound for the Wasserstein mean under certain condition.
\begin{theorem} \label{T:Wass-lower}
Let $\omega = (w_{1}, \dots, w_{n}) \in \Delta_{n}$ and $\mathbb{A} = (A_{1}, \dots, A_{n}) \in \mathbb{P}_{m}^{n}$. Then
\begin{displaymath}
\Omega(\omega; \mathbb{A}) \geq 2 I - \sum_{j=1}^{n} w_{j} A_{j}^{-1}.
\end{displaymath}
\end{theorem}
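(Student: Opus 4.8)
The plan is to combine the fixed-point characterization of the Wasserstein mean with the geometric--harmonic mean inequality and the operator convexity of matrix inversion. Set $X = \Omega(\omega; \mathbb{A})$. By Theorem \ref{T:WassEq} we have the identity $I = \sum_{j=1}^{n} w_{j} (A_{j} \# X^{-1})$, and the entire argument amounts to bounding the right-hand side from below.

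First I would apply the left-hand inequality of Lemma \ref{L:Geomean}(10) with $t = 1/2$, $A = A_{j}$ and $B = X^{-1}$. Since $(X^{-1})^{-1} = X$, this gives, for each $j$,
\[
A_{j} \# X^{-1} \;\ge\; \left[ \tfrac{1}{2} A_{j}^{-1} + \tfrac{1}{2} X \right]^{-1} \;=\; 2\,(A_{j}^{-1} + X)^{-1},
\]
and summing against the weights $w_{j}$ yields $I \ge 2 \sum_{j=1}^{n} w_{j} (A_{j}^{-1} + X)^{-1}$. Next I would invoke the operator convexity of the inverse map $Y \mapsto Y^{-1}$ on $\mathbb{P}_{m}$ (a standard fact; see \cite{Bh}), which, since $\omega$ is a probability vector, gives
\[
\sum_{j=1}^{n} w_{j} (A_{j}^{-1} + X)^{-1} \;\ge\; \left( \sum_{j=1}^{n} w_{j} (A_{j}^{-1} + X) \right)^{-1} \;=\; \left( \sum_{j=1}^{n} w_{j} A_{j}^{-1} + X \right)^{-1}.
\]
Combining the last two displays yields $I \ge 2 \bigl( \sum_{j=1}^{n} w_{j} A_{j}^{-1} + X \bigr)^{-1}$. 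Finally, since $Y \mapsto Y^{-1}$ reverses the Loewner order on $\mathbb{P}_{m}$, inverting this inequality produces $\tfrac{1}{2}\bigl(\sum_{j=1}^{n} w_{j} A_{j}^{-1} + X\bigr) \ge I$, that is, $X \ge 2 I - \sum_{j=1}^{n} w_{j} A_{j}^{-1}$, which is the claim.

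There is little genuine difficulty here; the one point requiring care is bookkeeping the two order-reversing inversions so that the final inequality emerges in the stated direction, together with the observation that the conclusion is a purely Loewner inequality — the matrix $2 I - \sum_{j} w_{j} A_{j}^{-1}$ on the right need not itself be positive definite, which is harmless. The only input beyond results already recorded in the excerpt is the operator convexity of inversion, for which a standard reference suffices.
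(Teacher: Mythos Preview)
Your proof is correct and follows essentially the same route as the paper's: both start from the fixed-point identity $I = \sum_{j} w_{j}(A_{j}\# X^{-1})$, apply the geometric--harmonic mean inequality from Lemma~\ref{L:Geomean}(10), and then use the operator convexity of inversion together with the order-reversal of the inverse map to arrive at $X \ge 2I - \sum_{j} w_{j} A_{j}^{-1}$. The only cosmetic difference is the order of the last two steps---the paper inverts first and then applies convexity (as the harmonic--arithmetic inequality), whereas you apply convexity first and invert afterwards---but the argument is the same.
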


\begin{proof}
Let $\Omega = \Omega(\omega; \mathbb{A})$. By Theorem \ref{T:WassEq} and the geometric-harmonic mean inequality in Lemma \ref{L:Geomean} (10),
\begin{displaymath}
I = \sum_{j=1}^{n} w_{j} (A_{j} \# \Omega^{-1}) \geq \sum_{j=1}^{n} w_{j} \left( \frac{A_{j}^{-1} + \Omega}{2} \right)^{-1}.
\end{displaymath}
Taking inverse on both sides and applying the convexity of inversion map in (1.33) of \cite{Bh} yield
\begin{displaymath}
I \leq \left[ \sum_{j=1}^{n} w_{j} \left( \frac{A_{j}^{-1} + \Omega}{2} \right)^{-1} \right]^{-1}
\leq \sum_{j=1}^{n} w_{j} \left( \frac{A_{j}^{-1} + \Omega}{2} \right) = \frac{1}{2} \sum_{j=1}^{n} w_{j} A_{j}^{-1} + \frac{1}{2} X.
\end{displaymath}
By a simple calculation, we obtain the desired inequality.
\end{proof}

\begin{remark}
Note that $\displaystyle 2 I - \sum_{j=1}^{n} w_{j} A_{j}^{-1} \leq \left[ \sum_{j=1}^{n} w_{j} A_{j}^{-1} \right]^{-1}$. Indeed,
\begin{displaymath}
I = \left( \sum_{j=1}^{n} w_{j} A_{j}^{-1} \right) \# \left( \sum_{j=1}^{n} w_{j} A_{j}^{-1} \right)^{-1} \leq \frac{1}{2} \left[ \sum_{j=1}^{n} w_{j} A_{j}^{-1} + \left( \sum_{j=1}^{n} w_{j} A_{j}^{-1} \right)^{-1} \right].
\end{displaymath}
\end{remark}

We give another upper bound for the Wasserstein mean different from the arithmetic mean.

\begin{remark}
Assume that $\displaystyle \sum_{j=1}^{n} w_{j} A_{j} < 2 I$. Let $\Omega = \Omega(\omega; \mathbb{A})$. By Theorem \ref{T:WassEq} and the arithmetic-geometric mean inequality in Lemma \ref{L:Geomean} (10),
\begin{displaymath}
I = \sum_{j=1}^{n} w_{j} (A_{j} \# \Omega^{-1}) \leq \sum_{j=1}^{n} w_{j} \left( \frac{A_{j} + \Omega^{-1}}{2} \right).
\end{displaymath}
By a simple calculation, we have $\displaystyle 0< 2 I - \sum_{j=1}^{n} w_{j} A_{j} \leq \Omega^{-1}$, and so
\begin{displaymath}
\Omega \leq \left[ 2 I - \sum_{j=1}^{n} w_{j} A_{j} \right]^{-1}.
\end{displaymath}
This means that $\displaystyle \left[ 2 I - \sum_{j=1}^{n} w_{j} A_{j} \right]^{-1}$ is an upper bound for $\Omega(\omega; A_{1}, \dots, A_{n})$.

On the other hand, note that $\displaystyle \left[ 2 I - \sum_{j=1}^{n} w_{j} A_{j} \right]^{-1} \geq \sum_{j=1}^{n} w_{j} A_{j}$. Indeed,
\begin{displaymath}
I = \left( \sum_{j=1}^{n} w_{j} A_{j} \right) \# \left( \sum_{j=1}^{n} w_{j} A_{j} \right)^{-1} \leq \frac{1}{2} \left[ \sum_{j=1}^{n} w_{j} A_{j} + \left( \sum_{j=1}^{n} w_{j} A_{j} \right)^{-1} \right].
\end{displaymath}
Then $\displaystyle 2 I - \sum_{j=1}^{n} w_{j} A_{j} \leq \left( \sum_{j=1}^{n} w_{j} A_{j} \right)^{-1}$, so $\displaystyle \left[ 2 I - \sum_{j=1}^{n} w_{j} A_{j} \right]^{-1} \geq \sum_{j=1}^{n} w_{j} A_{j}$.
\end{remark}

\section{Applications to the Lie-Trotter mean}

We see in this section some applications of the lower bound of the Wasserstein mean in Theorem \ref{T:Wass-lower} to the notion of Lie-Trotter means. A weighted $n$-mean $G_{n}$ on $\mathbb{P}_{m}$ for $n \geq 2$ is a map $G_{n}(\omega; \cdot): \mathbb{P}_{m}^{n} \to \mathbb{P}_{m}$ that is idempotent, in the sense that $G_{n}(\omega; X, \ldots, X) = X$ for all $X \in \mathbb{P}_{m}$. A weighted $n$-mean $G_{n}(\omega; \cdot): \mathbb{P}_{m}^{n} \to \mathbb{P}_{m}$ is called a \emph{multivariable Lie-Trotter mean} if it is differentiable and satisfies
\begin{equation} \label{E:Lie-Trotter}
\displaystyle \lim_{s \to 0} G_{n} (\omega; \gamma_{1}(s), \gamma_{2}(s), \ldots, \gamma_{n}(s))^{1/s} = \exp \left[ \sum_{i=1}^{n} w_{i} \gamma_{i}'(0) \right],
\end{equation}
where for $\epsilon > 0$, $\gamma_{i} : (-\epsilon, \epsilon) \to \mathbb{P}_{m}$ are differentiable curves with $\gamma_{i}(0) = I$ for all $i = 1, \dots, n$. See \cite{HK} for more details and information.

\begin{lemma}
Let $\Omega^{\omega} := \Omega(\omega; \cdot): \mathbb{P}_{m}^{n} \to \mathbb{P}_{m}$ be the Wasserstein mean for given probability vector $\omega = (w_{1}, \dots, w_{n})$. Then it is differentiable at $\mathbb{I} = (I, \dots, I)$ with
\begin{displaymath}
D \Omega^{\omega}(\mathbb{I})(X_{1}, \dots, X_{n}) = \sum_{j=1}^{n} w_{j} X_{j}.
\end{displaymath}
\end{lemma}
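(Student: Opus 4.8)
The plan is to differentiate the defining equation \eqref{E:Wass} of the Wasserstein mean implicitly at the point $\mathbb{I}$. Write $F: \mathbb{P}_m^n \times \mathbb{P}_m \to \mathbb{H}_m$ for the map $F(\mathbb{A}, X) = \sum_{j=1}^n w_j (A_j \# X^{-1}) - I$, so that $\Omega^\omega(\mathbb{A})$ is characterized by $F(\mathbb{A}, \Omega^\omega(\mathbb{A})) = 0$. First I would record that $F(\mathbb{I}, I) = 0$, since $I \# I = I$. Next I would compute the two partial differentials of $F$ at $(\mathbb{I}, I)$. Using that the weighted geometric mean $(A, B) \mapsto A \# B$ is smooth on $\mathbb{P}_m \times \mathbb{P}_m$ (Lemma \ref{L:Geomean} gives continuity; smoothness away from the boundary is standard via the integral/Riccati representation) and that its derivative at $(I, I)$ in each slot is one-half the identity — i.e. $D(A \# B)\big|_{(I,I)}(H, K) = \tfrac12(H + K)$, which follows by differentiating $A \#_{1/2} B$ along commuting perturbations and invoking the congruence covariance in Lemma \ref{L:Geomean}(6) to reduce the general case — together with $D(X \mapsto X^{-1})\big|_I (Y) = -Y$, I obtain
\[
D_{\mathbb{A}} F(\mathbb{I}, I)(X_1, \dots, X_n) = \frac12 \sum_{j=1}^n w_j X_j, \qquad D_X F(\mathbb{I}, I)(Y) = -\frac12 Y .
\]

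The operator $D_X F(\mathbb{I}, I) = -\tfrac12 \,\mathrm{id}_{\mathbb{H}_m}$ is invertible, so the Implicit Function Theorem applies: $\Omega^\omega$ is differentiable (indeed $C^1$) near $\mathbb{I}$, and
\[
D\Omega^\omega(\mathbb{I})(X_1, \dots, X_n) = -\big(D_X F(\mathbb{I}, I)\big)^{-1} \circ D_{\mathbb{A}} F(\mathbb{I}, I)(X_1, \dots, X_n) = 2 \cdot \frac12 \sum_{j=1}^n w_j X_j = \sum_{j=1}^n w_j X_j,
\]
which is the claimed formula. An alternative, more self-contained route that avoids quoting the IFT is to use the bounds already established: by Theorem \ref{T:arith-Wass} and Theorem \ref{T:Wass-lower},
\[
2I - \sum_{j=1}^n w_j A_j^{-1} \le \Omega^\omega(\mathbb{A}) \le \sum_{j=1}^n w_j A_j ,
\]
and if I set $A_j = I + t X_j + o(t)$ then both the upper bound and the lower bound equal $I + t \sum_j w_j X_j + o(t)$ (for the lower bound, expand $A_j^{-1} = I - tX_j + o(t)$). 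A squeeze then forces $\Omega^\omega(I + tX_1 + o(t), \dots) = I + t\sum_j w_j X_j + o(t)$ along every such curve; combined with strict convexity of the objective (hence continuity of $\Omega^\omega$) and the linearity of the candidate derivative, this identifies $D\Omega^\omega(\mathbb{I})$. This second approach has the pedagogical advantage of showcasing exactly why the lower bound of Theorem \ref{T:Wass-lower} was needed.

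The main obstacle is establishing that $\Omega^\omega$ is genuinely differentiable (not merely directionally differentiable or Gateaux) at $\mathbb{I}$, and that its derivative is the linear map displayed. With the IFT route this is automatic once smoothness of $\#$ and invertibility of $D_X F$ are in hand, so the real work is justifying the derivative formula for $A \# B$ at $(I, I)$ cleanly; with the squeeze route the obstacle shifts to promoting the coordinatewise-along-curves estimate to full Fréchet differentiability, which requires uniformity of the $o(t)$ term over directions — most smoothly handled by noting both bounding maps are smooth and sandwiching their common first-order Taylor expansion. Either way the core computation is the first-order behavior of the geometric mean near $(I,I)$, and everything else is bookkeeping with the inequalities and maps already available in the excerpt.
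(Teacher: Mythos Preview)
Your second route---the squeeze using Theorem~\ref{T:arith-Wass} and Theorem~\ref{T:Wass-lower}---is exactly the paper's argument: it sets $A_j = I + t X_j$, sandwiches the difference quotient $\bigl(\Omega^{\omega}(I+tX_1,\dots,I+tX_n)-I\bigr)/t$ between the difference quotients of the two bounds, checks that both flanks tend to $\sum_j w_j X_j$ as $t \to 0^{\pm}$, and concludes. Your first route, applying the Implicit Function Theorem to the defining equation~\eqref{E:Wass}, is a genuinely different argument. Its payoff is that it delivers $C^{1}$ (hence Fr\'echet) differentiability in one stroke, once the derivative of $(A,B)\mapsto A\# B$ at $(I,I)$ is known; the squeeze, as you correctly flag, a priori yields only directional derivatives along straight rays through $\mathbb{I}$ and needs an additional uniformity argument to upgrade to Fr\'echet differentiability---a step the paper does not carry out, being content with the directional computation. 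Conversely, the paper's squeeze is entirely self-contained within the bounds already established and makes transparent why Theorem~\ref{T:Wass-lower} was proved in the first place, whereas your IFT route imports smoothness of the geometric mean from outside the excerpt.
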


\begin{proof}
Let $X_{1},\dots,X_{n} \in S(\mathbb{H}).$
If $X_{1}=\dots=X_{n}=0,$ then the statement holds obviously. Without loss of generality, we assume that at least one of $X_{1},\dots,X_{n}$ is not zero. Set
\begin{displaymath}
\rho:=\max_{1\leq j \leq n}\sigma(X_{j})
\end{displaymath}
where $\sigma(X)$ is the spectral radius of $X.$ Then $\rho > 0.$ Define
\begin{displaymath}
f(t)=2I-\sum_{1}^{n}\omega_{j}(i+tX_{j})^{-1}
\end{displaymath}
on $(-\frac{1}{\rho},\frac{1}{\rho}).$ Then
\begin{eqnarray*}
\lambda(I+tX_{j})=1+t\lambda(X_{j})  \geq  1-|t||\lambda(X_{j})|
 \geq  1-\rho|t| >  0
\end{eqnarray*}
where $\lambda(X)$ denote the eigenvalue of $X.$ So $I+tX_{j} \in \mathbb{P}$ for any $t \in (-\frac{1}{\rho},\frac{1}{\rho}).$ Thus $f$ is well-defined in a neighborhood $(-\frac{1}{\rho},\frac{1}{\rho})$ of $0$ and $f(0)=2I-\sum_{j=1}^{n}\omega_{j}(I)^{-1}=I.$ Since the derivative of the map$t \mapsto (tX+I)^{-1}$ at $t=0$ is $-X.$ We have
\begin{eqnarray*}
f'(0) & = & \lim_{t \rightarrow 0}\frac{I-\sum_{j=1}^{n}\omega_{j}(I+tX_{j})^{-1}}{t}\\
& = & \lim_{t \rightarrow 0}\sum_{j=1}^{n}\omega_{j}(I+tX_{j})^{-1}X_{j}(I+tX_{j})^{-1} =  \sum_{j=1}^{n}\omega_{j}X_{j}.
\end{eqnarray*}
Then by Theorem \ref{T:arith-Wass} and \ref{T:Wass-lower},
\begin{eqnarray*}
\frac{[2I-\sum_{j=1}^{n}\omega_{j}(I+tX_{j})^{-1}]-I}{t} & \leq & \frac{\Omega_{n}^{\omega}(\omega ; I+tX_{j})-I}{t} \\
& \leq & \frac{\sum_{j=1}^{n}\omega_{j}(I+tX_{j})-I}{t} =  \sum_{j=1}^{n}\omega_{j}X_{j}
\end{eqnarray*}
for any sufficiently small $t > 0.$
So we have
\begin{displaymath}
\lim_{t \rightarrow 0^{+}}\frac{\Omega_{n}^{w}(\omega; I+tX_{1},\dots,I+tX_{n})-\Omega_{n}^{w}(I,\dots,I)}{t}=\sum_{j=1}^{n}\omega_{j}X_{j}.
\end{displaymath}
Since $\Omega_{n}^{\omega}(I,\dots,I)=I.$ Similarly, for $t < 0$
\begin{displaymath}
\lim_{t \rightarrow 0^{-}}\frac{\Omega_{n}^{\omega}(\omega;I+tX_{1},\dots,I+tX_{n})-\Omega_{n}^{\omega}(I,\dots,I)}{t}=\sum^{n}_{j=1}\omega_{j}X_{j}.
\end{displaymath}
We conclude that $\Omega_{n}^{w}$ is differentiable at $\mathbb{I}$ with $D\Omega_{n}^{\omega}(\mathbb{I})(X_{1},\dots,X_{n})=\sum^{n}_{j=1}\omega_{j}X_{j}.$
\end{proof}

\begin{theorem} \label{T:LT-Wass}
The Wasserstein mean is the multivariate Lie-Trotter mean, that is, for given $\omega = (w_{1}, \dots, w_{n}) \in \Delta_{n}$
\begin{displaymath}
\displaystyle \lim_{s \to 0} \Omega (\omega; \gamma_{1}(s), \gamma_{2}(s), \ldots, \gamma_{n}(s))^{1/s} = \exp \left[ \sum_{j=1}^{n} w_{j} \gamma_{j}'(0) \right],
\end{displaymath}
where for $\epsilon > 0$, $\gamma_{j} : (-\epsilon, \epsilon) \to \mathbb{P}_{m}$ are differentiable curves with $\gamma_{j}(0) = I$ for all $j = 1, \dots, n$.
\end{theorem}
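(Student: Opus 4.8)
The plan is to reduce the limit to the already-established differentiability of the Wasserstein mean at the identity tuple $\mathbb{I} = (I, \dots, I)$. The key observation is the following general principle about Lie--Trotter means, already used in \cite{HK} for the Riemannian mean: if a weighted $n$-mean $G_{n}$ is idempotent and differentiable at $\mathbb{I}$ with differential $DG_{n}(\mathbb{I})(X_{1}, \dots, X_{n}) = \sum_{j} w_{j} X_{j}$, then $G_{n}$ is automatically a multivariate Lie--Trotter mean. So the entire content of the theorem is packaged into the preceding lemma together with this reduction, and the proof consists in carrying out that reduction carefully.

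First I would fix differentiable curves $\gamma_{j} : (-\epsilon, \epsilon) \to \mathbb{P}_{m}$ with $\gamma_{j}(0) = I$, and write $X_{j} := \gamma_{j}'(0) \in \mathbb{H}_{m}$, so that by Taylor expansion $\gamma_{j}(s) = I + s X_{j} + o(s)$ as $s \to 0$. By the homogeneity property and continuity of $\Omega$ this puts us exactly in the regime where the lemma applies: as $s \to 0$ the tuple $(\gamma_{1}(s), \dots, \gamma_{n}(s))$ approaches $\mathbb{I}$ along directions converging to $(X_{1}, \dots, X_{n})$. Using the differentiability of $\Omega^{\omega}$ at $\mathbb{I}$ established in the previous lemma, together with the chain rule for the composite curve $s \mapsto \Omega^{\omega}(\gamma_{1}(s), \dots, \gamma_{n}(s))$, I get
\begin{displaymath}
\Omega^{\omega}(\gamma_{1}(s), \dots, \gamma_{n}(s)) = I + s \sum_{j=1}^{n} w_{j} X_{j} + o(s) \quad \text{as } s \to 0.
\end{displaymath}
Here one must be slightly careful: the lemma is stated as differentiability along linear curves $t \mapsto I + tX_{j}$, so I would either invoke the general fact that a map differentiable at a point has a well-defined derivative along \emph{any} differentiable curve through that point with the expected chain-rule value, or re-run the squeeze argument from the lemma's proof directly with $\gamma_{j}(s)$ in place of $I + sX_{j}$ (the bounds from Theorems \ref{T:arith-Wass} and \ref{T:Wass-lower} still sandwich $\Omega^{\omega}(\gamma_{1}(s), \dots, \gamma_{n}(s))$ between $2I - \sum_j w_j \gamma_j(s)^{-1}$ and $\sum_j w_j \gamma_j(s)$, both of which are $I + s\sum_j w_j X_j + o(s)$).

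Finally I would pass from the first-order expansion to the exponential limit. Write $H(s) := \Omega^{\omega}(\gamma_{1}(s), \dots, \gamma_{n}(s))$, so $H(s) = I + sZ + o(s)$ with $Z = \sum_{j} w_{j} X_{j}$. Then $\log H(s) = sZ + o(s)$ (using continuity and the standard expansion $\log(I + W) = W + O(\|W\|^{2})$ for small $W$, valid since $H(s) \to I$), hence $\tfrac{1}{s}\log H(s) \to Z$, and applying the continuous exponential map gives
\begin{displaymath}
\lim_{s \to 0} H(s)^{1/s} = \lim_{s \to 0} \exp\!\left( \tfrac{1}{s} \log H(s) \right) = \exp(Z) = \exp\!\left[ \sum_{j=1}^{n} w_{j} \gamma_{j}'(0) \right],
\end{displaymath}
which is the claim. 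The main obstacle is the second step — justifying the first-order expansion of $\Omega^{\omega}$ along general curves rather than just linear ones — but this is handled either by the chain rule (since differentiability at $\mathbb{I}$ was already proved) or by repeating the sandwich estimate, so the argument is short once the preceding lemma is in hand.
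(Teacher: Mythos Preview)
Your proposal is correct and matches the paper's proof in substance. The paper takes exactly what you describe as the alternative route: it applies the sandwich $2I - \sum_j w_j \gamma_j(s)^{-1} \le \Omega(\omega;\gamma_1(s),\dots,\gamma_n(s)) \le \sum_j w_j \gamma_j(s)$ directly to the curves, takes logarithms (operator monotone), divides by $s$ handling $s>0$ and $s<0$ separately, and evaluates both bounds via l'H\^opital---which is your squeeze-then-log computation in slightly different order, bypassing the lemma and the chain-rule subtlety you flagged.
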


\begin{proof}
Let $\omega=(\omega_{1},\dots,\omega_{n-1},\omega_{n}) \in \Delta_{n}$ and let $\gamma_{1},\dots,\gamma_{n} : (-\epsilon,\epsilon)\mapsto \mathbb{P}$ be any differentiable curve with $\gamma_{j}(0)=I$ for all $i=1,\dots,n.$ Then
\begin{displaymath}
2I-\sum_{j=1}^{n}\omega_{j}\gamma_{j}(s)^{-1} \leq \Omega(\omega;\gamma_{1}(s),\dots,\gamma_{n}(s)) \leq \sum_{j=1}^{n}\omega_{j}\gamma_{j}(s).
\end{displaymath}
Taking logarithms and using the fact that the logarithm function is operator monotone, we have
\begin{displaymath}
\log\left(2I-\sum_{j=1}^{n}\omega_{j}\gamma_{j}(s)^{-1}\right) \leq \log\Omega(\omega;\gamma_{1}(s),\dots,\gamma_{n}(s)) \leq \log\sum_{j=1}^{n}\omega_{j}\gamma_{j}(s).
\end{displaymath}
For $s>0,$ multiplying all terms by $1/s,$ we get
\begin{displaymath}
\frac{1}{s}\log\left(2I-\sum_{j=1}^{n}\gamma_{j}(S)^{-1}\right) \leq \log\Omega(\omega;\gamma_{1}(s),\dots,\gamma_{n}(s))^{1/s} \leq \frac{1}{s}\log\sum_{j=1}^{n}\omega_{j}\gamma_{j}(s).
\end{displaymath}
Taking the limit $s \rightarrow 0^{+},$ and using the l'H$\hat{o}$pital's theorem we obtain
\begin{eqnarray*}
\lim_{s \rightarrow 0^{+}}\log\Omega(\omega;\gamma_{1}(s),\dots,\gamma_{n}(s))^{1/s}
 = \sum_{j=1}^{n}\omega_{j}\gamma_{j}'(0).
\end{eqnarray*}
Since the logarithm map $\log:\mathbb{P} \rightarrow S(\mathbb{H})$ is diffeomorphic,
\begin{displaymath}
\lim_{s \rightarrow 0^{+}}\Omega(\omega;\gamma_{1}(s),\dots,\gamma_{n}(s))^{1/s}
 = \exp\left[\sum_{j=1}^{n}\omega_{j}\gamma_{j}'(0)\right].
\end{displaymath}
For $s < 0,$ we obtain $ \displaystyle \lim_{s \rightarrow 0^{-}}(\omega;\gamma_{1}(s),\dots,\gamma_{n}(s))^{1/s}=\exp\left[\sum_{j=1}^{n}\omega_{j}\gamma_{j}'(0)\right]$ by similar steps.
\end{proof}

Taking $\gamma_{i}(s) = A_{i}^{s}$ for each $i$ and some $A_{i} \in \mathbb{P}_{m}$, we obtain from Theorem \ref{T:LT-Wass}
\begin{corollary}
Let $A_{1}, \dots, A_{n} \in \mathbb{P}_{m}$ and $\omega = (w_{1}, \dots, w_{n}) \in \Delta_{n}$. Then
\begin{displaymath}
\displaystyle \lim_{s \to 0} \Omega (\omega; A_{1}^{s}, \ldots, A_{n}^{s})^{1/s} = \exp \left[ \sum_{i=1}^{n} w_{i} \log A_{i} \right].
\end{displaymath}
\end{corollary}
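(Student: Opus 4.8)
The plan is to apply Theorem~\ref{T:LT-Wass} directly with the specific choice of curves $\gamma_i(s) = A_i^s$ for each $i = 1, \dots, n$. First I would check that these curves are admissible in the sense required by the theorem: each $\gamma_i \colon \mathbb{R} \to \mathbb{P}_m$ defined by $\gamma_i(s) = A_i^s = \exp(s \log A_i)$ is differentiable (indeed smooth), takes values in $\mathbb{P}_m$ since $A_i$ is positive definite, and satisfies $\gamma_i(0) = A_i^0 = I$. These are precisely the hypotheses of Theorem~\ref{T:LT-Wass}.

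Next I would compute the derivative at $0$. Since $\gamma_i(s) = \exp(s \log A_i)$ and $\log A_i$ is a fixed Hermitian matrix commuting with itself, we have $\gamma_i'(s) = (\log A_i) \exp(s \log A_i)$, so $\gamma_i'(0) = \log A_i$. Substituting into the conclusion of Theorem~\ref{T:LT-Wass} gives
\begin{displaymath}
\lim_{s \to 0} \Omega(\omega; A_1^s, \dots, A_n^s)^{1/s} = \exp\left[ \sum_{i=1}^n w_i \gamma_i'(0) \right] = \exp\left[ \sum_{i=1}^n w_i \log A_i \right],
\end{displaymath}
which is exactly the claimed identity.

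There is essentially no obstacle here; the corollary is a routine specialization of the preceding theorem, included to record the Lie--Trotter product formula for the Wasserstein mean in its most familiar form. The only point requiring a word of care is the differentiability of $s \mapsto A_i^s$, which follows from the identity $A_i^s = \exp(s \log A_i)$ together with smoothness of the matrix exponential; this justifies both that the curves satisfy the hypotheses of Theorem~\ref{T:LT-Wass} and the computation $\gamma_i'(0) = \log A_i$.
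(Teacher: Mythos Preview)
Your proposal is correct and matches the paper's approach exactly: the paper derives the corollary in a single line by taking $\gamma_i(s) = A_i^s$ in Theorem~\ref{T:LT-Wass}. Your version simply spells out the verification that these curves satisfy the hypotheses and that $\gamma_i'(0) = \log A_i$, which the paper leaves implicit.
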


\section{Final remarks}

It is a natural question if the Wasserstein mean can be defined on the setting $\mathbb{P}$ of positive definite operators. Since one can not have the Wasserstein metric on $\mathbb{P}$, the definition \eqref{E:Wasserstein mean} may not be available. One possible approach to define the operator Wasserstein mean is to show the existence and uniqueness of the solution of the equation \eqref{E:Wass}.
On the other hand, one can not find the explicit form of the solution of the nonlinear equation \eqref{E:Wass}, but we have seen that the solution of \eqref{E:Wass} for two positive definite matrices $A$ and $B$ coincides with the geodesic $\gamma(t) = A \diamond_{t} B$ in \eqref{E:Wass-geodesic} with respect to the Wasserstein metric. We directly solve the nonlinear equation \eqref{E:Wass} for $n = 2$ by using the properties of geometric mean of positive definite operators.

For positive definite operators $A, B \in \mathbb{P}$ and $t \in [0,1]$ the weighted geometric mean of $A$ and $B$ is defined by
\begin{displaymath}
A \#_{t} B = A^{1/2} (A^{-1/2} B A^{-1/2})^{t} A^{1/2}.
\end{displaymath}
Note that $A \# B = A \#_{1/2} B$ is the unique positive definite solution $X \in \mathbb{P}$ of the Riccati equation $X A^{-1} X = B$. Moreover, it satisfies most of all properties in Lemma \ref{L:Geomean}, but we list some of them that are useful for our goal. See \cite{CPR,KA,LL}.
\begin{lemma} \label{L:O-Geomean}
Let $A, B, C, D \in \mathbb{P}$ and let $t \in [0,1]$. Then the following are satisfied.
\begin{itemize}
\item[(1)] $A \#_{t} B = B \#_{1-t} A$.
\item[(2)] $X (A \#_{t} B) X^{*} = (X A X^{*}) \#_{t} (X B X^{*})$ for any nonsingular matrix $X$.
\item[(3)] $(A \#_{t} B)^{-1} = A^{-1} \#_{t} B^{-1}$.
\end{itemize}
\end{lemma}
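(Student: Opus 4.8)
The plan is to obtain (3) by a one-line computation from the defining formula, to prove the congruence identity (2) --- which is the substantive point --- and then to deduce the symmetry (1) from (2). Throughout I would write $C := A^{-1/2} B A^{-1/2} \in \mathbb{P}$, so that $A \#_t B = A^{1/2} C^t A^{1/2}$.

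Item (3) is immediate: since $(C^t)^{-1} = (C^{-1})^t$ for $C \in \mathbb{P}$ and $C^{-1} = A^{1/2} B^{-1} A^{1/2}$,
\[
(A \#_t B)^{-1} = A^{-1/2} (C^{-1})^t A^{-1/2} = A^{-1/2} (A^{1/2} B^{-1} A^{1/2})^t A^{-1/2},
\]
which is precisely $A^{-1} \#_t B^{-1}$ by definition (using $(A^{-1})^{\pm 1/2} = A^{\mp 1/2}$). For (2) I would first dispose of the case $X = V$ unitary: applying the functional calculus identity $V f(T) V^* = f(V T V^*)$ with $f(s) = s^{1/2}, s^{-1/2}, s^t$ turns $V(A \#_t B) V^*$ directly into $(V A V^*) \#_t (V B V^*)$. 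Next I would handle $X = P \in \mathbb{P}$. For $t = \tfrac12$, if $Y := P (A \# B) P$ then $Y (PAP)^{-1} Y = P(A\#B) A^{-1} (A\#B) P = PBP$ by the Riccati equation for $A \# B$, so by uniqueness $P(A\#B)P = (PAP)\#(PBP)$; the same uniqueness argument gives the midpoint relation $(A\#_sB)\#(A\#_uB) = A\#_{(s+u)/2}B$, since $A^{1/2} C^{(s+u)/2} A^{1/2}$ solves $Y(A^{1/2}C^sA^{1/2})^{-1}Y = A^{1/2}C^uA^{1/2}$. Combining these two facts and inducting on $n$ yields $P(A\#_{k/2^n}B)P = (PAP)\#_{k/2^n}(PBP)$ for all dyadic weights, and continuity of $t \mapsto A\#_t B$ extends this to every $t \in [0,1]$. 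Finally, for an arbitrary nonsingular $X$, the polar decomposition $X = VP$ with $V$ unitary and $P = (X^*X)^{1/2} \in \mathbb{P}$ (so $X^* = PV^*$) gives
\[
X(A \#_t B) X^* = V\big( P(A \#_t B) P \big) V^* = V\big( (PAP)\#_t(PBP) \big) V^* = (XAX^*)\#_t(XBX^*)
\]
by the positive case followed by the unitary case. (Alternatively, (2) follows at once from the Kubo--Ando integral representation $A \#_t B = \tfrac{\sin \pi t}{\pi} \int_0^\infty \lambda^{t-1} (A : \lambda^{-1} B)\, d\lambda$ for $t \in (0,1)$ --- with the trivial cases $t \in \{0,1\}$ --- together with the transformer equality $X(A:B)X^* = (XAX^*):(XBX^*)$ for the parallel sum, which is an elementary manipulation of inverses; see \cite{KA}.)

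Item (1) then follows by applying (2) with the positive definite matrix $X = A^{-1/2}$ to $B \#_{1-t} A$:
\[
A^{-1/2} (B \#_{1-t} A) A^{-1/2} = C \#_{1-t} I = C^{1/2} C^{-(1-t)} C^{1/2} = C^t = A^{-1/2}(A \#_t B) A^{-1/2},
\]
where $C \#_{1-t} I = C^t$ because $C$ and $I$ commute; cancelling $A^{-1/2}$ on both sides gives $B \#_{1-t} A = A \#_t B$.

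The main obstacle is the positive-$X$, general-$t$ part of (2): unlike $t = \tfrac12$, there is no single algebraic (Riccati) equation characterizing $A \#_t B$, so one is forced either to bootstrap from the dyadic weights and pass to the limit as above, or to invoke the integral representation; everything else is either a routine functional calculus step or a direct computation from the definition. All three items are classical --- see \cite{CPR,KA,LL} --- so in the write-up it may in fact suffice simply to cite these sources.
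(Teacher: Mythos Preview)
Your proof is correct, and in fact goes well beyond what the paper does: the paper gives no proof of this lemma at all, simply citing \cite{CPR,KA,LL} before the statement. Your closing remark anticipates exactly this --- in the write-up it would indeed suffice to cite those sources, as the authors do.

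That said, your detailed argument is sound. The computation for (3) is immediate from the definition; the unitary case of (2) is a routine functional-calculus identity; the $t=\tfrac12$ case via the Riccati characterization, the midpoint relation $(A\#_sB)\#(A\#_uB)=A\#_{(s+u)/2}B$, the dyadic bootstrap, and the continuity passage are all correct and standard; the polar decomposition $X=VP$ then reduces the general case to the two already handled. The deduction of (1) from (2) by conjugating $B\#_{1-t}A$ with $A^{-1/2}$ is a nice touch and avoids a separate computation. The alternative via the Kubo--Ando integral representation and the transformer equality for the parallel sum is also valid and is essentially how \cite{KA} proceeds. One small remark: the lemma is stated in the paper for positive invertible \emph{operators} (not matrices), but everything you use --- functional calculus under unitary conjugation, polar decomposition of an invertible operator, continuity of $t\mapsto C^t$ --- carries over verbatim to that setting, so no change is needed.
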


\begin{theorem} \label{T:2-Wass}
Let $A, B \in \mathbb{P}$ and $t \in [0,1]$. Then the nonlinear equation
\begin{equation} \label{E:2-Wass}
I = (1-t) (A \# X^{-1}) + t (B \# X^{-1})
\end{equation}
has a unique positive definite solution $X = A \diamond_{t} B$.
\end{theorem}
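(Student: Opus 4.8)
The plan is to verify directly that $X = A \diamond_t B$ solves \eqref{E:2-Wass}, and then to argue uniqueness. For the existence half, recall from \eqref{E:Wass-geodesic} that $A \diamond_t B = Z(t) Z(t)^*$ where $Z(t) = (1-t) A^{1/2} + t\, (A^{-1} \# B) A^{1/2}$, a factorization already exploited in Section~2. First I would rewrite $A \diamond_t B$ in the more symmetric form $(1-t)^2 A + t^2 B + t(1-t)[(A^{-1}\#B)A + A(A^{-1}\#B)]$ and compute $A \# X^{-1}$ and $B \# X^{-1}$ using the Riccati-equation characterization: $A \# X^{-1}$ is the positive solution $Y$ of $Y A^{-1} Y = X^{-1}$, equivalently $X = (A \# X^{-1})^{-1} A (A \# X^{-1})^{-1}$. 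The cleanest route is to guess the two summands explicitly. Setting $C := A^{-1} \# B$, one expects $A \# X^{-1} = [(1-t) I + t C]^{-1} \cdot$ (something) after a congruence by $A^{1/2}$; indeed, writing $X = A^{1/2}[(1-t)I + tC']A^{1/2}\cdot$ (a Hermitian square) where $C' = A^{-1/2}(A^{-1}\#B)A^{1/2}$ is positive, one can use Lemma~\ref{L:O-Geomean}(2) to pull the congruence out of the geometric mean and reduce \eqref{E:2-Wass} to the commuting (scalar) case, where it becomes the elementary identity $(1-t)\cdot\frac{1}{(1-t)+t c} \cdot$ (appropriate factor) summing to $I$. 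I would carry out this reduction carefully: the key algebraic fact is that $A \# X^{-1} = [(1-t)I + tC]^{-1}$ and $B \# X^{-1} = C[(1-t)I + tC]^{-1}$, so that $(1-t)(A\# X^{-1}) + t(B \# X^{-1}) = [(1-t)I + tC]^{-1}[(1-t)I + tC] = I$. Verifying these two identities via the Riccati equation and Lemma~\ref{L:O-Geomean}(2),(3) is the technical core of the existence part.

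For uniqueness, I would mimic the strictly-convex/monotone argument but adapted to the operator setting where we no longer have the Wasserstein metric. One clean approach: suppose $X_1, X_2 \in \mathbb{P}$ both satisfy \eqref{E:2-Wass}. Apply the operator version of the geometric--harmonic and arithmetic--geometric bounds (Lemma~\ref{L:Geomean}(10), which holds for operators too) to both equations to get two-sided control $2I - [(1-t)A^{-1} + tB^{-1}] \le X_i \le (1-t)A + tB$, confining any solution to a bounded order-interval. Then I would use a contraction/monotonicity argument for the map $\Phi(X) = $ the right-hand side of the fixed-point form, or alternatively transplant the strict-concavity argument: the function $X \mapsto (1-t)\operatorname{Tr}(A^{1/2}XA^{1/2})^{1/2} + t\operatorname{Tr}(B^{1/2}XB^{1/2})^{1/2}$ is strictly concave (as in \cite{BJL}), and a solution of \eqref{E:2-Wass} is exactly a critical point of $X \mapsto \operatorname{Tr} X - 2[(1-t)\operatorname{Tr}(A^{1/2}XA^{1/2})^{1/2} + t\operatorname{Tr}(B^{1/2}XB^{1/2})^{1/2}]$ restricted to finite dimensions; but since we want the operator case, the safer route is the order-theoretic one. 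Specifically, I would show the map $\mathcal{G}(X) := X^{1/2}[(1-t)(X^{1/2}AX^{1/2})^{1/2\,\text{-rewritten}}]\dots$ — more precisely, rewrite \eqref{E:2-Wass} in its fixed-point form $X = (1-t)(X^{1/2}AX^{1/2})^{1/2} + t(X^{1/2}BX^{1/2})^{1/2}$ and prove it is a strict contraction in the Thompson metric on the bounded interval identified above, giving uniqueness.

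The main obstacle I anticipate is the uniqueness argument in the genuinely infinite-dimensional operator setting: the trace-functional strict-convexity argument from \cite{BJL} does not transfer, and one must instead establish a Thompson-metric contraction estimate for $X \mapsto (1-t)(X^{1/2}AX^{1/2})^{1/2} + t(X^{1/2}BX^{1/2})^{1/2}$ — which requires knowing that $Y \mapsto Y^{1/2}$ and the congruence $Y \mapsto X^{1/2} Y X^{1/2}$ have the right Thompson-metric Lipschitz behaviour (the square-root is Thompson-metric nonexpansive with factor $1/2$, and congruences are isometries), and then checking that the convex combination with weights $(1-t, t)$ still contracts. The existence computation, by contrast, should be routine once the congruence-by-$A^{1/2}$ reduction to the commuting case is set up, since all of Lemma~\ref{L:O-Geomean} is available. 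I would therefore spend the bulk of the writeup on (a) the explicit identification $A \# X^{-1} = [(1-t)I + tC]^{-1}$, $B \# X^{-1} = C[(1-t)I+tC]^{-1}$ with $C = A^{-1}\#B$, and (b) the contraction estimate establishing uniqueness.
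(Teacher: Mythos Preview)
Your existence verification is correct and pleasantly direct: writing $C=A^{-1}\#B$ one has $X=[(1-t)I+tC]\,A\,[(1-t)I+tC]$, so the Riccati characterisation gives $A\#X^{-1}=[(1-t)I+tC]^{-1}$ immediately, and since $CAC=B$ (Riccati for $C=A^{-1}\#B$) yields $CB^{-1}C=A^{-1}$, one also gets $B\#X^{-1}=C[(1-t)I+tC]^{-1}$; the weighted sum is $I$ because $C$ commutes with $(1-t)I+tC$.

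Where your plan diverges from the paper is uniqueness, and there the paper's route is both different and far simpler. The paper does \emph{not} invoke Thompson contraction, strict concavity, or any fixed-point machinery. It simply solves \eqref{E:2-Wass} algebraically: conjugate by $A^{-1/2}$, set $Y=A^{-1/2}X^{-1}A^{-1/2}$ and $Z=A^{-1/2}BA^{-1/2}$, obtaining $A^{-1}=(1-t)Y^{1/2}+t(Z\#Y)$; isolate $t(Z\#Y)$ and apply the Riccati equation to get
\[
[A^{-1}-(1-t)Y^{1/2}]\,Y^{-1}\,[A^{-1}-(1-t)Y^{1/2}]=t^{2}Z,
\]
which after conjugation by $A$ becomes $[Y^{-1/2}-(1-t)A]^{2}=t^{2}AZA$. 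Since $A^{-1}\ge(1-t)Y^{1/2}$ forces $Y^{-1/2}\ge(1-t)A$, one may take the positive square root and back-substitute to obtain $X=A^{-1/2}[(1-t)A+t(A^{1/2}BA^{1/2})^{1/2}]^{2}A^{-1/2}=A\diamond_{t}B$. Every step uses only Lemma~\ref{L:O-Geomean} and Riccati, so the argument is valid in the operator setting without traces or metrics, and it delivers existence and uniqueness in one stroke.

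So your anticipated ``main obstacle'' never arises in the paper. It is also worth noting that the obstacle is real for your route: the map $X\mapsto(X^{1/2}AX^{1/2})^{1/2}$ is \emph{not} a composition of a congruence and a square root in the order that yields the standard $\tfrac12$-Lipschitz bound (the congruence is by $X^{1/2}$, which varies with $X$), so the contraction estimate you sketch would need a separate argument. The paper's algebraic inversion sidesteps this entirely.
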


\begin{proof}
Pre- and post-multiplying all terms by $A^{-1/2}$ for $A > 0$ and by Lemma \ref{L:O-Geomean}, we get
\begin{displaymath}
A^{-1} = (1-t)(A^{-1/2}X^{-1}A^{-1/2})^{1/2} + t(A^{-1/2}BA^{-1/2} \# A^{-1/2}X^{-1}A^{-1/2}).
\end{displaymath}
Let $Y = A^{-1/2}X^{-1}A^{-1/2}$ and $Z = A^{-1/2}BA^{-1/2}.$ Then we have \begin{displaymath}
A^{-1} = (1-t)Y^{1/2} + t(Z \# Y).
\end{displaymath}
By using the Riccati equation, we get
\begin{displaymath} \frac{1}{t^{2}}[A^{-1}-(1-t)Y^{1/2}]Y^{-1}[A^{-1}-(1-t)Y^{1/2}] = Z.
\end{displaymath}
Pre- and post-multiplying all terms by $A$ for $A > 0,$ we get
\begin{displaymath}
\left[ Y^{-1/2} - (1-t) A \right]^{2} = t^{2} AZA.
\end{displaymath}
Taking square root on both sides, we obtain $Y^{-1/2} = (1-t)A + t(AZA)^{1/2}.$ By assumption, we have
\begin{displaymath}
(A^{1/2}XA^{1/2})^{1/2}= (1-t)A + t(A^{1/2}BA^{1/2})^{1/2}.
\end{displaymath}
Taking square on both sides, pre- and post-multiplying all terms by $A^{-1/2}$ for $A > 0,$ we obtain
\begin{displaymath}
X = A^{-1/2} [ (1-t)A + t(A^{1/2}BA^{1/2})^{1/2} ]^{2} A^{-1/2} = A \diamond_{t} B.
\end{displaymath}
\end{proof}

\textbf{Open question}. For positive definite operators $A_{1}, \dots, A_{n}$ and a positive probability vector $(w_{1}, \dots, w_{n})$, the nonlinear equation
\begin{displaymath}
I = \sum_{j=1}^{n} w_{j} (A_{j} \# X^{-1}),
\end{displaymath}
has a unique positive definite solution $X$ in the setting $\mathbb{P}$ of positive definite operators?

This is an interesting and challangeable problem, and Theorem \ref{T:2-Wass} gives us a positive answer.

\vspace{4mm}

\textbf{Acknowledgement} \\

The work of S. Kim was supported by the National Research Foundation of Korea (NRF) grant funded by the Korea government (MIST) (No. NRF-2018R1C1B6001394).

\end{document}